\DeclareMathOperator{\Frob}{Frob}
\DeclareMathOperator{\fpt}{fpt}
\DeclareMathOperator{\Soc}{Soc}
\DeclareMathOperator{\Ker}{Ker}
\DeclareMathOperator{\Proj}{Proj}
\DeclareMathOperator{\Ext}{Ext}
\DeclareMathOperator{\Hom}{Hom}
\DeclareMathOperator{\ord}{ord}
\DeclareMathOperator{\Hyp}{Hyp}
\address{Susanne Müller\\ Johannes Gutenberg-Universität Mainz \\ Fachbereich 08 \\ Staudingerweg 9 \\ 55099 Mainz \\ Germany}
\email{susanne.mueller@uni-mainz.de}
\begin{document} 

\title{The $F$-pure threshold of quasi-homogeneous polynomials}
\author{Susanne Müller}
\begin{abstract}
\small{Inspired by the work of Bhatt and Singh \cite{BS} we compute the $F$-pure threshold of quasi-homogeneous polynomials.  
We first consider the case of a curve given by a quasi-homogeneous polynomial $f$ in three variables $x,y,z$ of degree equal to the degree of $xyz$ and then we proceed with the general case of a Calabi-Yau hypersurface, i.e. a hypersurface given by a quasi-homogeneous polynomial $f$ in $n+1$ variables $x_0, \ldots, x_n$ of degree equal to the degree of $x_0 \cdots x_n$.}
\end{abstract}
\maketitle

\newtheoremstyle{Remark}
  {5pt}                   %Space above
  {5pt}                   %Space below
  {\normalfont}           %Body font
  {}                      %Indent amount (empty = no indent,\parindent = para indent)
  {\bfseries}             %Thm head font
  {.}                     %Punctuation after thm head
  {2mm}                   %Space after thm head: " " = normal interword space; \newline = linebreak
  {}                      %Thm head spec (can be left empty, meaning`normal')

\newtheorem{definition}{Definition}[section]
\newtheorem{theorem}[definition]{Theorem}
\newtheorem{proposition}[definition]{Proposition}
\newtheorem{lemma}[definition]{Lemma}

\newtheorem*{theorem_o}{Theorem} 

\theoremstyle{Remark}
\newtheorem{remark}[definition]{Remark}
\newtheorem{example}[definition]{Example}
\newtheorem{notation}[definition]{Notation}

\section{Introduction}

To any polynomial $f\in K[x_0, \ldots, x_n]=R$, where $K$ is a field of characteristic $p>0$, one can attach an invariant called the $F$-pure threshold, first defined in \cite{TakWat}, \cite{MusTakWat}.  
This invariant is the characteristic $p$ analogue of the log canonical threshold in characteristic zero. The $F$-pure threshold, which is a rational number, is a quantitative measure of the severity of the singularity of $f$. Smaller values of the $F$-pure threshold correspond to a "worse" singularity.

In this article we focus mainly on the computation of the $F$-pure threshold of quasi-homogeneous polynomials. A polynomial $f$ is called quasi-homogeneous if there exists an $\mathbb{N}$-grading of $K[x_0, \ldots, x_n]$ such that $f$ is homogeneous with respect to this grading. 
The first of our two main results is the following:

\begin{theorem_o}[see Theorem \ref{MainTheorem}]
Let $C=\Proj(R/fR)$ be the curve given by a quasi-homogeneous polynomial $f \in K[x,y,z]$ of degree equal to the degree of $xyz$ with an isolated singularity. Then
$$\fpt(f)= \begin{cases} 1, &\mbox{if  $C$ is ordinary} \\
1-\frac{1}{p}, & \mbox{otherwise.} \end{cases}$$
\end{theorem_o}

Here, a curve $C$ is (by definition) ordinary if and only if the map on $H^1(C, \mathcal{O}_C)$ induced by Frobenius is bijective. This theorem is a generalization of the two-dimensional case of the main theorem of Bhatt and Singh (\cite{BS}), which says that the $F$-pure threshold of an elliptic curve $E$ given by a homogeneous polynomial $f \in K[x,y,z]$ of degree three is $1$ if $E$ is ordinary and $1-\frac{1}{p}$ otherwise. In contrast to the paper of Bhatt and Singh our proof does not rely on deformation-theoretic arguments and hence gives a more elementary approach to this result.

More generally, we consider the case of a Calabi-Yau hypersurface \linebreak $X=\Proj \left(R/fR \right)$ given by a quasi-homogeneous polynomial $f \in K[x_0, \ldots, x_n]=R$ and relate the $F$-pure threshold of $f$ to a numerical invariant of $X$, namely the
order of vanishing of the so-called Hasse invariant.
For this, we consider the family $\pi: \mathfrak{X} \rightarrow \Hyp_{w}$ of hypersurfaces of degree $w=\deg\left( x_0 \cdots x_n \right)$ in the weighted projective space $\mathbb{P}^n \left( \alpha_0, \ldots, \alpha_n \right)$.
Our chosen hypersurface $X=\Proj\left(R/fR\right)$ gives a point $\left[X\right]$ in $\Hyp_{w}$.\\
The Hasse invariant $H$ of a suitable family $\pi: \mathfrak{X} \rightarrow S$ of varieties in characteristic $p$ (for the precise conditions see section 5) is the element in
\begin{align*}
\Hom \left(R^N \pi_{\ast}^{(1)} \mathcal{O}_{\mathfrak{X}^{(1)}}, R^N \pi_{\ast} \mathcal{O}_{\mathfrak{X}} \right)
 &\cong \Hom \left(\left( R^N \pi_{\ast} \mathcal{O}_{\mathfrak{X}} \right)^{p}, R^N \pi_{\ast} \mathcal{O}_{\mathfrak{X}} \right) \\
&\cong \Hom\left(\mathcal{O}_S, \left( R^N \pi_{\ast} \mathcal{O}_{\mathfrak{X}} \right)^{1-p} \right)\\
&\cong H^0 \left(S, \left( R^N \pi_{\ast} \mathcal{O}_{\mathfrak{X}} \right)^{1-p} \right)
\end{align*}
induced by the relative Frobenius $\Frob_{\mathfrak{X} / S}$. Here the relative Frobenius is given by the following diagram
\begin{equation*}
\begin{tikzcd} 
 \mathfrak{X} \arrow[bend right]{rdd}{\pi} \arrow[dashed]{rd}{\Frob_{\mathfrak{X} / S}} \arrow[bend left]{rrd}{\Frob_{\mathfrak{X}}} & & \\
			 & \mathfrak{X}^{(1)} \arrow{r} \arrow{d}{\pi^{(1)}} & \mathfrak{X} \arrow{d}{\pi} \\
       & S \arrow{r}{\Frob_S} & S
\end{tikzcd}
\end{equation*} 
Now, fix $s \in S$ and an integer $t>0$ and let $t[s]$ be the order $t$ neighbourhood of $s$. 
Then, the order of vanishing of the Hasse invariant at the point $s \in S$ is given by
$\ord_s(H)=\max  \left\{t | i^{\ast}H=0 \text{ where } i:t[s] \hookrightarrow S \right\}$.
The second main result of this paper, generalizing \cite{BS} to the quasi-homogeneous case, is the following:

\begin{theorem_o}[see Theorem \ref{Hassethm}]
If $p \geq w(n-2)+1$, then $\fpt(f)=1-\frac{h}{p}$, where $h$ is the order of vanishing of the Hasse invariant at $\left[X\right] \in \Hyp_{w}$ on the deformation space $\mathfrak{X}$ of $X \subset \mathbb{P}^n \left(\alpha_0, \ldots, \alpha_n \right)$.
\end{theorem_o}

In the homogeneous case this result was proven by Bhatt and Singh in \cite{BS} by pointing out a connection between the order of vanishing of the Hasse invariant and the injectivity of the map $H^{n-1}\left(X, \mathcal{O}_{X}\right) \stackrel{a_t}{\longrightarrow} H^{n-1}\left(tX, \mathcal{O}_{tX}\right)$ induced by $\Frob_R$, where $tX$ is the order $t$ neighbourhood of $X$ in $\mathbb{P}^n \left(\alpha_0, \ldots, \alpha_n \right)$. 
We generalize this statement to the quasi-homogeneous case using local cohomology instead of sheaf cohomology, i.e. we consider the map $a_t$ as a map $H^{n}_{\mathfrak{m}}\left(R/f\right)_0 \stackrel{a_t}{\longrightarrow} H^{n}_{\mathfrak{m}}\left(R/f^t\right)_0$, which makes our approach rather explicit.

We should also mention the paper \cite{HBWZ} of Hernández, Nú{\~n}ez - Betancourt, Witt and Zhang, where the authors compute the possible values of the $F$-pure threshold of a quasi-homogeneous polynomial of arbitrary degree using base $p$ expansions. In particular, as a corollary they get the same list of possible $F$-pure thresholds as we obtain here in the case of a Calabi-Yau hypersurface. 

In sections 2 and 3 we set up the notation and extend some results of Bhatt and Singh (\cite{BS}) to the quasi-homogeneous setting, leading to recovering the results of \cite{HBWZ} in the Calabi-Yau case. In section 4 we prove Theorem \ref{MainTheorem} by elementary methods. The final section 5 is dedicated to the proof of Theorem \ref{Hassethm}.

\textbf{Acknowledgements.}
I thank Manuel Blickle, Duco van Straten and Axel Stäbler for useful discussions and a careful reading of earlier versions of this article.
Furthermore, I thank the referee for a careful reading of this paper and many valuable comments, which helped to improve the paper. 
The author was supported by SFB/Transregio 45 Bonn-Essen-Mainz financed by Deutsche Forschungsgemeinschaft.

\section{Quasi-homogeneous polynomials with an isolated singularity}

In this section we fix some notation and give some basic facts, which we will need later. For further information we refer the reader to \cite{K}, for example.

Throughout this article, $K$ will denote a field of characteristic $p>0$ and $$R:=K[x]:=K[x_0, \ldots, x_n]$$ will be the polynomial ring over $K$ in $n+1$ variables. By $$\mathfrak{m}:= (x_0, \ldots, x_n)$$ we will denote the maximal ideal of $R$ generated by the variables of $R$.

An $(n+1)$-tuple $\alpha= \left(\alpha_0, \ldots, \alpha_n\right) \in \mathbb{N}_{>0}^{n+1}$ defines a grading on $R=K[x_0, \ldots, x_n]$ by setting
$$\deg(x_i):=\alpha_i \text{ and } \deg \left( x^k \right):=\sum_{i=0}^n \alpha_i k_i,$$
where $k=(k_0, \ldots, k_n)$ is a multi-index and $x^k= x_0^{k_0} \cdots x_n^{k_n}$. It follows that $R= \bigoplus_d R_d$, where 
$$R_d:=\left\{ \sum_{k=(k_0, \ldots, k_n)} p_{k} x^{k} \Bigr| \sum_{i=0}^n \alpha_i k_i=d, p_k \in K\right\}$$
and the elements of $R_d$ are called quasi-homogeneous polynomials of degree $d$ and type $\alpha$.  
For an element $f \in R_d$ we have
$$f\left( \lambda^{\alpha_0}x_0, \ldots, \lambda^{\alpha_n}x_n \right)=\lambda^d f(x_0, \ldots, x_n) \text{ for all } \lambda \in K.$$
We set $$w:=\sum_{i=0}^n \alpha_i = \deg \left( x_0 \cdots x_n \right).$$

A sequence $f_1, \ldots, f_m \in R$ is called a regular sequence if the image of $f_i$ in \linebreak $R/(f_1, \ldots, f_{i-1})$ is a non-zero-divisor ($1 \leq i \leq m$) and if $(f_1, \ldots, f_m) \neq R$.
We say that an element $f \in R_d$ has an isolated singularity if 
$$\frac{\partial f}{\partial x_0}, \ldots, \frac{\partial f}{\partial x_n}$$
is a regular sequence.
Furthermore, the Jacobian ideal of $f \in R$ is 
$$J(f):= \left(\frac{\partial f}{\partial x_0}, \ldots, \frac{\partial f}{\partial x_n} \right)$$
and the Milnor number of $f$ is defined by
$$\mu(f):= \dim_K R/J(f).$$

It is well-known that $f$ has an isolated singularity if and only if $\mu(f) < \infty$. This can be shown with explicit bounds for $\mu(f)$ by using the Poincaré series.
The Poincaré series $H_M(T)$ of a finitely generated graded $R$-module $M$ is defined by
$$H_M(T):= \sum_{j} \dim_{K} \left(M_{j}\right) T^j,$$
where $M_{j}$ is the homogeneous part of $M$ of degree $j$.

First, we want to compute $H_R(T)$. For this, we use the fact that if $f \in R$ is a homogeneous element of degree $d$, which is a non-zero divisor of $M$, then 
\begin{equation} \label{eqPoincare}
H_{M/fM}(T)=(1-T^d)H_M(T).
\end{equation}

Hence, $H_{R/x_n}(T)=\left(1-T^{\alpha_n}\right) H_R(T)$
and inductively we get
$$1=H_{R/\left(x_0, \ldots, x_n\right)}(T)=\prod_{i=0}^n \left(1-T^{\alpha_i}\right) H_R(T).$$
Thus,
$$H_R(T)=\prod\limits_{i=0}^n\frac{1}{\left(1-T^{\alpha_i}\right)}.$$

Now, let $f_0, \ldots, f_m \in R$ be quasi-homogeneous polynomials of type $\alpha$ with \linebreak $\deg(f_i)=d_i$, $0 \leq i \leq m$, such that $f_0, \ldots, f_m$ is a regular sequence. Then 
$$H_{R/(f_0, \ldots, f_m)}(T)= \prod\limits_{j=0}^m \left(1-T^{d_j}\right) H_R(T)
= \frac{\prod\limits_{j=0}^m \left(1-T^{d_j}\right)}{\prod\limits_{i=0}^n \left(1-T^{\alpha_i}\right)}.$$
It is shown in \cite[p. 213]{K} that for $m=n$ one has
$$\dim_K \left(R/(f_0, \ldots, f_n) \right) = \lim_{T \rightarrow 1} \prod\limits_{i=0}^n\frac{ \left(1-T^{d_i}\right)}{ \left(1-T^{\alpha_i}\right)} = \prod_{i=0}^n \frac{d_i}{\alpha_i}.$$
In particular, $R/(f_0, \ldots, f_n)$ is a finite dimensional $K$-algebra.

If we have $f \in R_d$ with an isolated singularity, then $\frac{\partial f}{\partial x_0}, \ldots, \frac{\partial f}{\partial x_n}$ form a regular sequence and we know that $\deg\left(\frac{\partial f}{\partial x_i}\right)=d-\alpha_i$. Therefore the Milnor number is given by
$$\mu(f) = \dim_K \left(R/J(f) \right) = \prod_{i=0}^n \frac{d-\alpha_i}{\alpha_i}.$$

\section{The $F$-pure threshold of a quasi-homogeneous polynomial}

In \cite{BS}, Bhatt and Singh explain how to compute the $F$-pure threshold of a homogeneous polynomial and consider in particular the case of a Calabi-Yau hypersurface. In the following section we extend their results to compute the $F$-pure threshold of a quasi-homogeneous polynomial. To a large extent the proofs are analogous to the ones in \cite{BS}.

During this section let $q=p^e$ be a power of $p$. Remember that $R=K[x_0, \ldots, x_n]$ and $\mathfrak{m}= (x_0, \ldots, x_n)$. By $F:R \rightarrow R$, $r \mapsto r^p$ we denote the Frobenius or $p$-th power map on $R$.
We denote by $$\mathfrak{a}^{[q]} := \left(a^q \big| a\in \mathfrak{a}\right)$$ the Frobenius power of an ideal $\mathfrak{a} \subset R$. 
For $f \in \mathfrak{m}$ one defines in \cite{BS} $$\mu_f(q):=\min \left\{k \in \mathbb{N} \big| f^k \in \mathfrak{m}^{[q]}\right\}$$
and observes that $\mu_f(1)=1$ and that 
\begin{equation*}
1 \leq \mu_f(q) \leq q.
\end{equation*}
Furthermore,
\begin{equation} \label{mu_f(pq)}
\mu_f(pq) \leq p \mu_f(q),
\end{equation}
since $f^{\mu_f(q)} \in \mathfrak{m}^{[q]}$ implies that $f^{p\mu_f(q)} \in \mathfrak{m}^{[pq]}$. Hence, $\left\{\frac{\mu_f(p^e)}{p^e}\right\}_{e \geq 0}$ is a non-increasing sequence of positive rational numbers and one defines:

\begin{definition}
The $F$-pure threshold of $f$ is
$$\fpt(f):= \lim_{e \rightarrow \infty} \frac{\mu_f(p^e)}{p^e}.$$
\end{definition}

The definition of $\mu_f(q)$ yields $f^{\mu_f(q)-1} \notin \mathfrak{m}^{[q]}$ and therefore we get $f^{p\mu_f(q)-p} \notin \mathfrak{m}^{[pq]}$. Together with \eqref{mu_f(pq)} we deduce $p\mu_f(q)-p+1 \leq \mu_f(pq) \leq p \mu_f(q),$ which implies
\begin{equation} \label{Abrundung}
\mu_f(q) = \left\lceil \frac{\mu_f(pq)}{p}\right\rceil .
\end{equation}

Now, let $f \in R=K[x_0, \ldots, x_n]$ be a quasi-homogeneous polynomial of degree $d$ and type $\alpha$ and let $t \leq q$ be an integer. Then the Frobenius iterate $F^e:R/fR \rightarrow R/fR$ lifts to a map $R/fR \rightarrow R/f^qR$. We compose this map with the canonical surjection $R/f^qR \rightarrow R/f^tR$ and get a map $\widetilde{F^e_t}$.

\begin{lemma} \label{interpretation mu}
Let $f \in R$ be a quasi-homogeneous polynomial of degree $d$ and type $\alpha$ and let $t \leq q$ be an integer.
Then $\mu_f(q)>q-t$ if and only if $\widetilde{F^e_t} : H^n_{\mathfrak{m}}(R/fR) \rightarrow H^n_{\mathfrak{m}}(R/f^tR)$ is injective.
\end{lemma}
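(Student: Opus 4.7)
The plan is to identify $(\widetilde{F^e_t})_*$ with the concrete formula $\eta\mapsto f^{q-t}\,F^e(\eta)$ in a suitable ambient module, and then finish by a short essential-socle argument. The main work will lie in the translation step.

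First, I would identify $H^n_{\mathfrak{m}}(R/f^kR)$ with $(0:_E f^k)\subseteq E:=H^{n+1}_{\mathfrak{m}}(R)$ for $k\in\{1,t,q\}$. This follows from the long exact sequence of local cohomology attached to $0\to R\xrightarrow{f^k}R\to R/f^kR\to 0$ together with the vanishing $H^i_{\mathfrak{m}}(R)=0$ for $i<n+1$ (because $R$ is Cohen--Macaulay of dimension $n+1$). Concretely, $E$ has the standard \v{C}ech description with $K$-basis $\{\tfrac{1}{x_0^{a_0}\cdots x_n^{a_n}}:a_i\geq 1\}$, and the natural Frobenius $F^e\colon E\to E$ sends $\tfrac{1}{x^a}$ to $\tfrac{1}{x^{aq}}$.

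Next, I would factor $\widetilde{F^e_t}$ as the Frobenius lift $\phi\colon R/fR\to R/f^qR$, $\bar r\mapsto\overline{r^q}$, followed by the canonical surjection $\pi\colon R/f^qR\to R/f^tR$. Each extends to a morphism of the short exact sequences above: for $\phi$ the vertical maps on the three terms are $(F^e,F^e,\phi)$ (using $(fr)^q=f^qr^q$), and for $\pi$ they are $(\,\cdot f^{q-t},\mathrm{id},\pi)$ (using $f^q=f^t\cdot f^{q-t}$). Naturality of the connecting homomorphism then gives that $\phi_*$ is the restriction of $F^e$ and $\pi_*$ is multiplication by $f^{q-t}$, so that
\[
(\widetilde{F^e_t})_*(\eta)=f^{q-t}\,F^e(\eta)\qquad\text{for all }\eta\in(0:_E f).
\]

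To finish, I observe that this formula is $p^e$-semilinear in $\eta$ (since $F^e(r\eta)=r^qF^e(\eta)$), so $\ker(\widetilde{F^e_t})_*$ is still an $R$-submodule of $(0:_E f)$. Because $E$ is the injective hull of $K$, its socle $(0:_E\mathfrak{m})=K\cdot\eta_0$ with $\eta_0:=\tfrac{1}{x_0\cdots x_n}$ is one-dimensional and essential, so every nonzero $R$-submodule of $E$ contains $\eta_0$. Hence $(\widetilde{F^e_t})_*$ is injective iff $\eta_0\notin\ker$, iff $\tfrac{f^{q-t}}{x_0^q\cdots x_n^q}\neq 0$ in $E$, iff $f^{q-t}\notin\mathfrak{m}^{[q]}$, iff $\mu_f(q)>q-t$. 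The principal obstacle is the preceding step of translating $\pi\circ\phi$ into the explicit formula via the naturality of the connecting maps; once that identification is in hand, the socle argument closes the proof immediately.
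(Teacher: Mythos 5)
Your proof is correct and follows essentially the same path as the paper's: both identify $H^n_{\mathfrak{m}}(R/f^kR)$ with the $f^k$-torsion inside $E=H^{n+1}_{\mathfrak{m}}(R)$ via the long exact sequence, both reduce the induced map $\widetilde{F^e_t}$ to the explicit formula $\eta\mapsto f^{q-t}F^e(\eta)$, and both close with the socle of $E$. The only (cosmetic) difference is in the finish: the paper first proves $F^e$ is injective on $E$, invokes the five/snake lemma to transfer injectivity of $\widetilde{F^e_t}$ to injectivity of $f^{q-t}F^e$ on the middle column, and then runs a second socle argument; you instead observe directly that $\ker\widetilde{F^e_t}$ is an $R$-submodule of the essential-socle module $E$ (using $p^e$-semilinearity) and do a single socle check, which is a mild streamlining of the same idea.
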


\begin{proof} 
We have a commutative diagram with exact rows
\begin{equation*}
\begin{tikzcd}
      0 \arrow{r} & R(-d) \arrow{r}{f} \arrow{d}{f^{q-t}F^e}  & R \arrow{r} \arrow{d}{F^e} & R/fR \arrow{r} \arrow{d}{\widetilde{F^e_t}}& 0\\
      0 \arrow{r} & R(-dt) \arrow{r}{f^t}                 & R \arrow{r}              & R/f^tR  \arrow{r} & 0,
\end{tikzcd}
\end{equation*}
which gives an induced diagram of local cohomology modules
\begin{equation*}
\begin{tikzcd}
      0 \arrow{r} & H^n_{\mathfrak{m}}(R/fR) \arrow{r} \arrow{d}{\widetilde{F^e_t}}  & H^{n+1}_{\mathfrak{m}}(R)(-d) \arrow{r}{f} \arrow{d}{f^{q-t}F^e} & H^{n+1}_{\mathfrak{m}}(R) \arrow{r} \arrow{d}{F^e}& 0\\
      0 \arrow{r} & H^n_{\mathfrak{m}}(R/f^tR) \arrow{r} & H^{n+1}_{\mathfrak{m}}(R)(-dt) \arrow{r}{f^t} & H^{n+1}_{\mathfrak{m}}(R)  \arrow{r} & 0.
\end{tikzcd}
\end{equation*}

We first show that the map $F^e$ is injective. 
For this, it suffices to show that $F^{e}$ acts injectively on the socle
$$\Soc\left(H^{n+1}_{\mathfrak{m}}(R)\right)=\left\langle \left[\frac{1}{x_0 \ldots x_n}\right]\right\rangle,$$
which follows from $F^e \left(\left[\frac{1}{x_0 \ldots x_n}\right]\right)= \left[ \frac{1}{x_0^q \cdot \ldots \cdot x_n^q} \right] \neq 0$. 
Now by the five lemma $\widetilde{F^e_t}$ is injective if and only if $f^{q-t}F^e$ is injective. Again by looking at the socle one shows that $f^{q-t}F^e$ is injective if and only if 
$$f^{q-t}F^e \left(\left[\frac{1}{x_0 \ldots x_n}\right]\right)=\left[\frac{f^{q-t}}{x_0^q \ldots x_n^q}\right] \neq 0,$$
which is equivalent to $f^{q-t} \notin \mathfrak{m}^{[q]}$. By the definition of $\mu_f(q)$ this is equivalent to $\mu_f(q)>q-t$.
\end{proof}

This cohomological description of $\mu_f(q)$ will be very useful in the following sections. Now we want to compute the $F$-pure threshold of a quasi-homogeneous polynomial. As a first step we give a lower and an upper bound for $\mu_f(p)$ (see Lemma \ref{Abschaetzung p kein Teiler} and Lemma \ref{obere Abschaetzung}) from which one obtains bounds for $\mu_f(p^e)$. We start with some lemmata which are similar to results shown in \cite{HBWZ}.

\begin{lemma} \label{max Grad Jacobi}
Let $f \in R=K[x_0, \ldots, x_n]$ be a quasi-homogeneous polynomial of degree $d$ and type $\alpha$ with an isolated singularity. Then
$$R_{\geq (n+1)d-2w+1} \subset J(f),$$
where $w=\sum\limits_{i=0}^n \alpha_i$.
\end{lemma}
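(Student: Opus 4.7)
The plan is to deduce the statement purely from the Poincaré series computation already carried out in Section 2, by identifying the top nonzero degree of the graded Artinian ring $R/J(f)$.

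Since $f$ has an isolated singularity, the partial derivatives $\partial f/\partial x_0, \ldots, \partial f/\partial x_n$ form a regular sequence of quasi-homogeneous elements of degrees $d-\alpha_0, \ldots, d-\alpha_n$. Hence $R/J(f)$ is a graded Artinian complete intersection, and by the Poincaré series formula recorded earlier,
$$H_{R/J(f)}(T) \;=\; \prod_{i=0}^n \frac{1 - T^{d-\alpha_i}}{1 - T^{\alpha_i}}.$$
Because $\mu(f)<\infty$, this rational function is in fact a polynomial in $T$. All that remains is to determine its degree.

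To pin down the top degree, I would invoke the fact that a graded Artinian complete intersection is Gorenstein, so its Hilbert series satisfies the symmetry $T^{N} H_{R/J(f)}(1/T) = H_{R/J(f)}(T)$, where $N$ is the socle degree. A direct manipulation using
$$\frac{1-T^{-(d-\alpha_i)}}{1-T^{-\alpha_i}} \;=\; T^{-(d-\alpha_i)+\alpha_i}\,\frac{1-T^{d-\alpha_i}}{1-T^{\alpha_i}}$$
inside the product yields $H_{R/J(f)}(1/T) = T^{-(n+1)d + 2w}\,H_{R/J(f)}(T)$, so $N = (n+1)d - 2w$. Equivalently, one may simply observe that the closed form displays a polynomial whose numerator and denominator have degrees $(n+1)d - w$ and $w$ respectively, forcing $\deg H_{R/J(f)}(T) = (n+1)d - 2w$ as soon as one knows the quotient is a polynomial.

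Either way, $(R/J(f))_k = 0$ for every $k > (n+1)d - 2w$, which translates directly into $R_{\geq (n+1)d - 2w + 1} \subset J(f)$, as required. I do not anticipate a real obstacle here: the statement is essentially the standard computation of the socle degree of a weighted graded complete intersection applied to the Jacobian ideal, and everything needed has already been set up in Section 2.
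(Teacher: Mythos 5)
Your proof is correct and essentially matches the paper's: both start from the observation that an isolated singularity forces $H_{R/J(f)}(T)=\prod_i(1-T^{d-\alpha_i})/\prod_i(1-T^{\alpha_i})$ to be a polynomial, and then read off that its degree is $(n+1)d-2w$, so $(R/J(f))_k=0$ for $k>(n+1)d-2w$. The paper uses precisely your ``either way'' shortcut (degree of numerator minus degree of denominator once divisibility is known); the Gorenstein symmetry computation you lead with is a valid alternative justification for the same socle-degree count, but is not needed.
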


\begin{proof}
Since $f$ has an isolated singularity, there exists a $k \in \mathbb{N}$ such that $\mathfrak{m}^k \subset J(f)$. This means that $\left(R/J(f)\right)_i=0$ for all $i$ greater than some $N \in \mathbb{N}$. Thus, the Poincaré series 
$$H_{R/J(f)}(T)= \frac{\prod\limits_{j=0}^n \left(1-T^{d-\alpha_j}\right)}{\prod\limits_{i=0}^n \left(1-T^{\alpha_i}\right)}$$ 
must be a polynomial, which means that $\prod\limits_{j=0}^n \left(1-T^{d-\alpha_j}\right)$ is divisible by $\prod\limits_{i=0}^n \left(1-T^{\alpha_i}\right)$. Therefore
$$\deg \left(H_{R/J(f)}\right)=\left((n+1)d-\sum_{j=0}^n \alpha_j\right)-\sum_{i=0}^n \alpha_i=(n+1)d-2w.$$
This means that $\left(R/J(f)\right)_i=0$ for all $i \geq (n+1)d-2w+1$, thus, $R_{\geq (n+1)d-2w+1} \subset J(f)$.
\end{proof}

\begin{lemma} \label{lemma colon ideal}
We have
$$\left(\mathfrak{m}^{[q]} :_{R} R_{\geq (n+1)d-2w+1} \right) \setminus \mathfrak{m}^{[q]} \subset R_{\geq (q+1)w-(n+1)d}.$$
\end{lemma}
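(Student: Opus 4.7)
The plan is to reduce to the homogeneous case and then exploit the Gorenstein duality of the Artinian quotient $A := R/\mathfrak{m}^{[q]}$. Set $D := (n+1)d - 2w + 1$ for brevity. Since $\mathfrak{m}^{[q]} = (x_0^q, \ldots, x_n^q)$ and $R_{\geq D}$ are both homogeneous ideals of $R$, so is the colon $\left(\mathfrak{m}^{[q]} :_R R_{\geq D}\right)$. Hence it suffices to show: every homogeneous element $g \in R_j$ whose image $\bar{g}$ in $A$ is nonzero and satisfies $\bar{g} \cdot A_k = 0$ for all $k \geq D$ must have $j \geq (q+1)w - (n+1)d$.

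Next I would set up the duality. The sequence $x_0^q, \ldots, x_n^q$ is a homogeneous regular sequence, so $A$ is a graded Artinian complete intersection and therefore Gorenstein. Using \eqref{eqPoincare} iteratively, its Poincaré series is
$$H_A(T) = \prod_{i=0}^n \frac{1 - T^{q\alpha_i}}{1 - T^{\alpha_i}},$$
so $A_k = 0$ for $k > N := (q-1)w$ and $A_N$ is one-dimensional, spanned by the class of $x_0^{q-1} \cdots x_n^{q-1}$. The multiplication pairing
$$A_j \times A_{N-j} \longrightarrow A_N \cong K$$
is then perfect; equivalently, for any nonzero $\bar{g} \in A_j$ there exists $\bar{h} \in A_{N-j}$ with $\bar{g}\bar{h} \neq 0$.

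Now I would put the two pieces together. Assume for contradiction that $N - j \geq D$. Then the standing hypothesis $\bar{g} \cdot A_k = 0$ for $k \geq D$ applies with $k = N - j$, giving $\bar{g} \cdot A_{N-j} = 0$. By the perfect pairing this forces $\bar{g} = 0$, contradicting the assumption. Hence $N - j < D$, that is
$$j \;>\; N - D \;=\; (q-1)w - (n+1)d + 2w - 1 \;=\; (q+1)w - (n+1)d - 1,$$
which yields $j \geq (q+1)w - (n+1)d$, as required.

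The only non-bookkeeping step is the Gorenstein duality on $A$; once that perfect pairing is in hand, the bound drops out immediately from the degree arithmetic $N - D = (q+1)w - (n+1)d - 1$. This is the step I would expect to spell out most carefully, since it relies on $x_0^q, \ldots, x_n^q$ forming a regular sequence (already used to compute $H_A(T)$) together with the fact that a graded Artinian complete intersection is Gorenstein with socle concentrated at the top degree.
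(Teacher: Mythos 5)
Your proof is correct, but it takes a genuinely different route from the paper's. The paper argues directly on monomials: it assumes a monomial $\lambda = x_0^{q-1-b_0}\cdots x_n^{q-1-b_n}$ lies in $\left(\mathfrak{m}^{[q]}:R_{\geq D}\right)\setminus\mathfrak{m}^{[q]}$ with $\deg\lambda < (q+1)w-(n+1)d$, where $D=(n+1)d-2w+1$; the degree bound then forces the complementary monomial $\eta = x_0^{b_0}\cdots x_n^{b_n}$ into $R_{\geq D}$, so the colon condition would place $\lambda\eta = (x_0\cdots x_n)^{q-1}$ in $\mathfrak{m}^{[q]}$, a contradiction. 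You instead invoke Gorenstein duality of the Artinian complete intersection $A=R/\mathfrak{m}^{[q]}$: socle in degree $N=(q-1)w$, perfect pairing $A_j\times A_{N-j}\to A_N$, so a nonzero $\bar g\in A_j$ annihilating $A_{\geq D}$ forces $N-j<D$, and the arithmetic $N-D=(q+1)w-(n+1)d-1$ gives the bound. Both are valid, and your reduction to homogeneous $g$ at the start is the right move (the set-theoretic difference in the statement is not closed under taking homogeneous components, and the lemma is in fact only ever applied to homogeneous elements such as $f^{k-1}$ and $f^{k-1}g^p$). What the paper's argument buys is that it is entirely self-contained and computationally explicit; in fact it is a bare-hands instance of exactly the duality you use, since for a monomial $\lambda\notin\mathfrak{m}^{[q]}$ the complementary monomial $\eta$ is precisely the dual element whose existence the perfect pairing guarantees, owing to the monomial $K$-basis of $A$. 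What your argument buys is conceptual transparency: it isolates the Gorenstein duality of $R/\mathfrak{m}^{[q]}$ as the actual mechanism, without any appeal to the monomial structure.
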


\begin{proof}
Suppose the statement is false, then there exists a monomial $$\lambda:= x_0^{q-1-b_0} \ldots x_n^{q-1-b_n} \subset \left(\mathfrak{m}^{[q]} :_{R} R_{\geq (n+1)d-2w+1} \right) \setminus \mathfrak{m}^{[q]}$$ of degree $$\deg(\lambda)= (q-1)w- \sum_{i=0}^n b_i \alpha_i < (q+1)w-(n+1)d.$$
Equivalently, $$\sum_{i=0}^n b_i \alpha_i > (q-1)w-(q+1)w+(n+1)d=(n+1)d-2w,$$
thus the monomial $\eta:=x_0^{b_0} \ldots x_n^{b_n}$ is an element of $R_{\geq (n+1)d-2w+1}$. Therefore,
$x_0^{q-1} \ldots x_n^{q-1} =\lambda \cdot \eta \in \mathfrak{m}^{[q]}$, which is a contradiction.
\end{proof}

This yields a lower and an upper bound for $\mu_f(q)$.

\begin{lemma} \label{Abschaetzung p kein Teiler}
Let $f \in R$ be a quasi-homogeneous polynomial of degree $d$ and type $\alpha$ with an isolated singularity. If $p \nmid \; \mu_f(q)$, then
$$\mu_f(q) \geq \frac{w(q+1)-nd}{d}.$$
\end{lemma}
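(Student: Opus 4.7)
The plan is to differentiate the defining relation $f^{\mu}\in\mathfrak{m}^{[q]}$ (where $\mu:=\mu_f(q)$) to push the information down to $f^{\mu-1}$ multiplied by the Jacobian ideal, and then use Lemmas \ref{max Grad Jacobi} and \ref{lemma colon ideal} to convert this into a degree bound.

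First I would record the simple observation that each partial derivative $\partial/\partial x_i$ sends $\mathfrak{m}^{[q]}$ into itself. Indeed, if $g=\sum_j g_j x_j^q$, then $\partial g/\partial x_i=\sum_j (\partial g_j/\partial x_i)\,x_j^q+q\,g_i\,x_i^{q-1}$, and the last term vanishes in characteristic $p$ because $q=p^e$. Applying $\partial/\partial x_i$ to $f^{\mu}\in\mathfrak{m}^{[q]}$ therefore yields $\mu\, f^{\mu-1}\,\partial f/\partial x_i\in\mathfrak{m}^{[q]}$ for each $i$. Here is where the hypothesis $p\nmid\mu$ enters: the factor $\mu$ is a unit in $K$, so it can be cancelled, giving
\[
f^{\mu-1}\,J(f)\subset\mathfrak{m}^{[q]}.
\]

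Next, Lemma \ref{max Grad Jacobi} tells us that $R_{\geq (n+1)d-2w+1}\subset J(f)$, so the above inclusion upgrades to
\[
f^{\mu-1}\cdot R_{\geq (n+1)d-2w+1}\subset\mathfrak{m}^{[q]},
\]
i.e. $f^{\mu-1}\in\bigl(\mathfrak{m}^{[q]}:_R R_{\geq (n+1)d-2w+1}\bigr)$. By the very definition of $\mu=\mu_f(q)$ we have $f^{\mu-1}\notin\mathfrak{m}^{[q]}$, so Lemma \ref{lemma colon ideal} applies and forces $f^{\mu-1}\in R_{\geq (q+1)w-(n+1)d}$.

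Finally, since $f$ is quasi-homogeneous of degree $d$, the element $f^{\mu-1}$ is homogeneous of degree $(\mu-1)d$, so the only way it can lie in $R_{\geq (q+1)w-(n+1)d}$ (once it is nonzero) is that $(\mu-1)d\geq (q+1)w-(n+1)d$, which rearranges to $\mu\geq\frac{w(q+1)-nd}{d}$, as claimed. The main thing to be careful about is the interaction between differentiation and the Frobenius power $\mathfrak{m}^{[q]}$ and the cancellation of $\mu$, which is exactly the point where $p\nmid\mu$ is used; the remaining steps are clean applications of the two preceding lemmata and a degree comparison.
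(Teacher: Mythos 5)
Your proposal is correct and follows the paper's proof essentially line for line: differentiate $f^{\mu}\in\mathfrak{m}^{[q]}$ to obtain $f^{\mu-1}J(f)\subset\mathfrak{m}^{[q]}$ using $p\nmid\mu$, then combine Lemma~\ref{max Grad Jacobi} and Lemma~\ref{lemma colon ideal} with the minimality of $\mu$ and a degree count. The only addition is your explicit justification that partial derivatives preserve $\mathfrak{m}^{[q]}$, which the paper asserts without proof.
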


\begin{proof}
With $k:= \mu_f(q)$ we have $f^k \in \mathfrak{m}^{[q]}$. The partial derivatives $\frac{\partial}{\partial x_i}$ map $\mathfrak{m}^{[q]}$ to $\mathfrak{m}^{[q]}$ and therefore $$k f^{k-1} \frac{\partial f}{\partial x_i} \in \mathfrak{m}^{[q]}$$ for all i. Since $k$ is non-zero in $K$, it follows $$f^{k-1} J(f) \subset \mathfrak{m}^{[q]}.$$ 
By the definition of $k$ we know that $f^{k-1} \notin \mathfrak{m}^{[q]}$ so that $f^{k-1} \in \left(\mathfrak{m}^{[q]} :_R J(f) \right) \setminus \mathfrak{m}^{[q]}$. By Lemma \ref{max Grad Jacobi} and Lemma \ref{lemma colon ideal} it follows that $$\left(\mathfrak{m}^{[q]} :_R J(f) \right) \setminus \mathfrak{m}^{[q]} \subset \left(\mathfrak{m}^{[q]} :_R R_{\geq (n+1)d-2w+1} \right) \setminus \mathfrak{m}^{[q]} \subset R_{\geq (q+1)w-(n+1)d}.$$
This means that $f^{k-1} \in R_{\geq (q+1)w-(n+1)d}$. Hence, $$d(k-1)=\deg(f^{k-1}) \geq w(q+1)-(n+1)d$$ and therefore $k \geq \frac{w(q+1)-nd}{d}$.
\end{proof}

\begin{lemma} \label{obere Abschaetzung}
Let $f \in R$ be a quasi-homogeneous polynomial of degree $d$ and type $\alpha$.
Then $$\mu_f(q) \leq \left\lceil \frac{wq-w+1}{d}\right\rceil$$ for all $q=p^e$.
\end{lemma}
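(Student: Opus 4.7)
The plan is to exploit the fact that $f^k$ is itself quasi-homogeneous of degree $dk$, and then argue that any quasi-homogeneous polynomial whose degree is sufficiently large must automatically lie in $\mathfrak{m}^{[q]}$. Concretely, a monomial $x_0^{a_0}\cdots x_n^{a_n}$ fails to lie in $\mathfrak{m}^{[q]}=(x_0^q,\ldots,x_n^q)$ if and only if $a_i\leq q-1$ for every $i$. The (quasi-homogeneous) degree of such a monomial is $\sum_{i=0}^n \alpha_i a_i \leq (q-1)\sum_{i=0}^n \alpha_i=(q-1)w$.

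The first step is therefore to observe that every quasi-homogeneous polynomial of degree strictly greater than $(q-1)w$ lies in $\mathfrak{m}^{[q]}$: it is a $K$-linear combination of monomials of that degree, and each such monomial must have some exponent $\geq q$ by the computation above. In particular, if $k$ is a positive integer with $dk\geq (q-1)w+1=wq-w+1$, then $f^k$ (being quasi-homogeneous of degree $dk$) lies in $\mathfrak{m}^{[q]}$.

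The second step is to translate this into the claimed bound on $\mu_f(q)$. The inequality $dk\geq wq-w+1$ is equivalent to $k\geq (wq-w+1)/d$, and since $k$ is an integer the smallest such $k$ is $\lceil (wq-w+1)/d\rceil$. By the defining minimality of $\mu_f(q)$, we conclude
\[
\mu_f(q)\leq \left\lceil \frac{wq-w+1}{d}\right\rceil,
\]
as desired.

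There is essentially no obstacle here: the argument uses only the elementary description of $\mathfrak{m}^{[q]}$ on monomials and the fact that $f^k$ is quasi-homogeneous. Note that no hypothesis on isolated singularity is needed, matching the statement of the lemma, and that the bound holds for every $q=p^e$ with the same uniform argument.
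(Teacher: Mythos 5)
Your proof is correct and takes the same approach as the paper: both observe that any quasi-homogeneous polynomial of degree at least $w(q-1)+1$ must lie in $\mathfrak{m}^{[q]}$, and apply this to $f^k$. You simply spell out the monomial-degree computation that the paper leaves implicit.
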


\begin{proof}
For $k \in \mathbb{N}$ one has $f^k \in \mathfrak{m}^{[q]}$ if $dk \geq w(q-1)+1$. Thus, 
$$\mu_f(q)=\min \left\{k \big|f^k \in \mathfrak{m}^{[q]}\right\} \leq \left\lceil \frac{wq-w+1}{d}\right\rceil.$$
\end{proof}

The following lemma explains how to compute $\mu_f(pq)$ if $\mu_f(q)$ is given, when certain conditions are fulfilled.

\begin{lemma} \label{Liften mu_f}
Let $f \in R$ be a quasi-homogeneous polynomial of degree $d$ and type $\alpha$ with an isolated singularity.
\begin{enumerate}
	\item[(1)] If $\frac{\mu_f(q)-1}{q-1}=\frac{w}{d}$ for some $q=p^s$, then $\frac{\mu_f(pq)-1}{pq-1}=\frac{w}{d}$.
	\item[(2)] Suppose $p \geq nd-d-w+1$. If $\frac{\mu_f(q)}{q}<\frac{w}{d}$ for some $q=p^s$, then $\mu_f(pq)=p\mu_f(q)$.
	In particular,
$$\frac{\mu_f(pq)}{pq} = \frac{\mu_f(q)}{q},$$ thus the sequence $\left\{\frac{\mu_f(p^e)}{p^e}\right\}_{p^e \geq q}$ is constant.
\end{enumerate}
\end{lemma}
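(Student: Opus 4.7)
Both parts start from the sandwich
\begin{equation*}
p\mu_f(q) - p + 1 \;\leq\; \mu_f(pq) \;\leq\; p\mu_f(q),
\end{equation*}
which follows from combining \eqref{mu_f(pq)} with \eqref{Abrundung}; the two parts then diverge in how the exact value inside this window is pinned down.

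For part (1), the hypothesis $\mu_f(q)=\frac{w(q-1)}{d}+1$ saturates the upper bound of Lemma \ref{obere Abschaetzung} (and forces $d \mid w(q-1)$). A weighted-degree count shows that $(x_0\cdots x_n)^{q-1}$ is the unique monomial of weighted degree $w(q-1)$ all of whose exponents are $\leq q-1$, so the hypothesis is equivalent to the coefficient $c_q$ of $(x_0\cdots x_n)^{q-1}$ in $f^{w(q-1)/d}$ being nonzero. I would then use the characteristic-$p$ factorisation $f^{w(pq-1)/d}=(f^{w(q-1)/d})^p\cdot f^{w(p-1)/d}$ together with the Frobenius identity $(\sum c_\gamma x^\gamma)^p=\sum c_\gamma^p x^{p\gamma}$ to expand the coefficient of $(x_0\cdots x_n)^{pq-1}$ in $f^{w(pq-1)/d}$; a second weighted-degree count forces only the diagonal pair $\gamma=(q-1,\ldots,q-1)$, $\delta=(p-1,\ldots,p-1)$ to contribute, giving $c_q^p\cdot c_p$. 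Applying the same reasoning with $q$ replaced by $p$ shows that $c_p\neq 0$ is forced by $c_q\neq 0$, so $c_q^p\cdot c_p\neq 0$ and $\mu_f(pq)\geq\frac{w(pq-1)}{d}+1$; the matching upper bound is again Lemma \ref{obere Abschaetzung}.

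For part (2), I would argue by contradiction. Assume $\mu_f(pq)<p\mu_f(q)$; the sandwich forces $\mu_f(pq)=p\mu_f(q)-j$ with $1\leq j\leq p-1$, so $p\nmid\mu_f(pq)$ and Lemma \ref{Abschaetzung p kein Teiler} yields $\mu_f(pq)\geq\frac{w(pq+1)-nd}{d}$. The integer form $d\mu_f(q)\leq wq-1$ of the hypothesis $\mu_f(q)/q<w/d$, combined with $\mu_f(pq)\leq p\mu_f(q)-1$, gives $\mu_f(pq)\leq\frac{p(wq-1)-d}{d}$. Comparing the two bounds reduces to $p\leq nd-d-w$, contradicting the hypothesis $p\geq nd-d-w+1$. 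The "in particular" clause follows by iteration, since $\mu_f(pq)/(pq)=\mu_f(q)/q$ remains strictly below $w/d$ and the argument applies again with $pq$ in place of $q$.

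The main obstacle I foresee is the degree bookkeeping in part (1): verifying that only the diagonal multi-index pair produces the target exponent $(pq-1,\ldots,pq-1)$ in the expansion of $(f^{w(q-1)/d})^p\cdot f^{w(p-1)/d}$. This rigidity is a direct consequence of the quasi-homogeneity together with the saturation of the Lemma \ref{obere Abschaetzung} bound, and it is what makes the entire inductive argument close.
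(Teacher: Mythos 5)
Your proof is correct and follows essentially the same strategy as the paper: for part (1), both arguments identify $(x_0\cdots x_n)^{q-1}$ as the socle generator of $R/\mathfrak{m}^{[q]}$ and reduce the hypothesis to the nonvanishing of the coefficient $c_q$ of that monomial in $f^{w(q-1)/d}$, while part (2) is the paper's contradiction argument line by line. In part (1) your write-up is in fact a shade cleaner: the paper raises $f^{\mu_f(q)-1}$ to the power $\frac{pq-1}{q-1}$, which is not an integer once $q>p$, whereas your factorisation $f^{w(pq-1)/d}=\bigl(f^{w(q-1)/d}\bigr)^p\cdot f^{w(p-1)/d}$ together with the Frobenius expansion keeps all exponents integral; the only step worth spelling out a little more is the phrase "$c_p\neq 0$ is forced by $c_q\neq 0$", which follows by iterating the same factorisation, $c_{p^{i+1}}=c_{p^i}^p\,c_p$, down from $q$.
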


\begin{proof}
To prove the first statement, assume that $\frac{\mu_f(q)-1}{q-1}=\frac{w}{d}$ for some $q=p^e$. Then $f^{\mu_f(q)-1}$ has degree 
$$d (\mu_f(q)-1)=d \cdot \frac{w}{d} \cdot (q-1)=w(q-1).$$ 
Since we also know that $f^{\mu_f(q)-1} \notin \mathfrak{m}^{[q]}$, it follows that $f^{\mu_f(q)-1}$ generates \linebreak $\Soc\left(R/\mathfrak{m}^{[q]}\right)$. Therefore, $\left(f^{\mu_f(q)-1}\right)^{\frac{pq-1}{q-1}}$ generates $\Soc\left(R/\mathfrak{m}^{[pq]}\right)$, so
$$\left(\mu_f(q)-1\right)\frac{pq-1}{q-1}=\mu_f(pq)-1.$$ Rearranging the terms one obtains $\frac{\mu_f(pq)-1}{pq-1}=\frac{\mu_f(q)-1}{q-1}=\frac{w}{d}$.

In order to prove the second statement, note that by equation \eqref{mu_f(pq)} we only need to show that $\mu_f(pq) < p\mu_f(q)$ cannot occur.
Therefore, suppose $\mu_f(pq) < p\mu_f(q)$. First we will show that $\mu_f(pq)$ cannot be a multiple of $p$. Suppose $\mu_f(pq)=pl$ for some $l$. By equation \eqref{Abrundung} it follows that $p \mu_f(q)=pl=\mu_f(pq)$, which is a contradiction. We can now use Lemma \ref{Abschaetzung p kein Teiler} and get 
$$\mu_f(pq) \geq \frac{w(pq+1)-nd}{d}.$$
Using $\mu_f(pq) < p\mu_f(q)$ and the second assumption, namely $d\mu_f(q)<wq$, we get $$w(pq+1)-nd \leq d\mu_f(pq) \leq dp\mu_f(q)-d \leq wpq-p-d.$$
This gives $p \leq nd-d-w$, which contradicts our assumption on $p$.
\end{proof}

To see how one can calculate the $F$-pure threshold of a quasi-homogeneous polynomial using Lemma \ref{Abschaetzung p kein Teiler}, \ref{obere Abschaetzung} and \ref{Liften mu_f}, let us consider an example.

\begin{example}
Let $f=xy^2+x^4$, then $f$ is quasi-homogeneous of degree $8$ and type $\alpha=(2,3)$. Let $p \neq 2$. We claim that
$$\fpt(f)= \begin{cases} \frac{5}{8}, &\mbox{if  $p \equiv 1,3 \mod 8$} \\
\frac{5}{8}-\frac{1}{8p}, & \mbox{if  $p \equiv 5 \mod 8$}\\
\frac{5}{8}-\frac{3}{8p}, &\mbox{if  $p \equiv 7 \mod 8$.} \end{cases}$$

By Lemma \ref{obere Abschaetzung} we get 
$$\mu_f(p) \leq  \left\lceil \frac{5p-4}{8}\right\rceil=\left\lceil p-\frac{3p+4}{8}\right\rceil \leq p-1.$$
With Lemma \ref{Abschaetzung p kein Teiler} it follows $$\mu_f(p) \geq \left\lceil \frac{5p-3}{8}\right\rceil.$$
Since $\left\lceil \frac{5p-4}{8}\right\rceil \leq \left\lceil \frac{5p-3}{8}\right\rceil$, we conclude that $\mu_f(p)=\left\lceil \frac{5p-3}{8}\right\rceil$.

First, let $p \equiv 1 \mod 8$, then $\mu_f(p)=\frac{5}{8}p+\frac{3}{8}$. With the first part of Lemma \ref{Liften mu_f} it follows that $\mu_f(q)=\frac{5}{8}q+\frac{3}{8}$. Hence, $\fpt(f)=\lim\limits_{e \rightarrow \infty} \frac{\mu_f(p^e)}{p^e}=\frac{5}{8}$.

Secondly, let $p \equiv 3 \mod 8$, then $\mu_f(p)=\frac{5}{8}p+\frac{1}{8}$. Since we cannot use Lemma \ref{Liften mu_f}, we compute $\mu_f(p^2)$ in the same way we computed $\mu_f(p)$. We get $\mu_f(p^2)=\frac{5}{8}p^2+\frac{3}{8}$.  
With the first part of Lemma \ref{Liften mu_f} it follows $\mu_f(q)=\frac{5}{8}q+\frac{3}{8}$ for all $q=p^e \geq p^2$. Hence, $\fpt(f)=\lim\limits_{e \rightarrow \infty} \frac{\mu_f(p^e)}{p^e}=\frac{5}{8}$.

Now let $p \equiv 5 \mod 8$, then $\mu_f(p)=\frac{5}{8}p-\frac{1}{8}$. With the second part of Lemma \ref{Liften mu_f} it follows that $\left\{\frac{\mu_f(q)}{q}\right\}_q$ is a constant sequence. Hence, $\fpt(f)=\frac{5}{8}-\frac{1}{8p}$.

The last case is $p \equiv 7 \mod 8$. Here $\mu_f(p)=\frac{5}{8}p-\frac{3}{8}$. With the second part of Lemma \ref{Liften mu_f} it follows that $\left\{\frac{\mu_f(q)}{q}\right\}_q$ is a constant sequence. Hence, $\fpt(f)=\frac{5}{8}-\frac{3}{8p}$.
\end{example}

As a consequence of the above lemmata we obtain the following theorem, which is very similar to the one given in \cite{BS} for homogeneous polynomials.

\begin{theorem} \label{theorem f-schwelle}
Let $f \in K[x_0, \ldots, x_n]$ be a quasi-homogeneous polynomial of degree $w=\sum\limits_{i=0}^n \alpha_i$ with an isolated singularity. Then
\begin{enumerate}
	\item[(1)] $\mu_f(p)=p-h$, where $0 \leq h \leq n-1$ is an integer.
	\item[(2)] $\mu_f(pq)=p \mu_f(q)$ for all $q$ with $q \geq n-1$.
	\item[(3)] If $p \geq n-1$, then $\fpt(f)=1-\frac{h}{p}$, where $0 \leq h \leq n-1$.
\end{enumerate}
\end{theorem}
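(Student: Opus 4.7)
The plan is to track the ``defect'' $H_e := p^e - \mu_f(p^e)$ and play the lower bound of Lemma \ref{Abschaetzung p kein Teiler} (valid only when $p \nmid \mu_f(p^e)$) against the arithmetic constraint coming from \eqref{mu_f(pq)} and \eqref{Abrundung}. Specifically, rewriting \eqref{mu_f(pq)} as $H_{e+1} \geq pH_e$ and \eqref{Abrundung} as $H_e = \lfloor H_{e+1}/p\rfloor$ gives the window
$H_{e+1} \in [pH_e,\, pH_e + p - 1]$, and in particular $\mu_f(p^{e+1}) = p\mu_f(p^e)$ is equivalent to $p \mid H_{e+1}$, which in turn is equivalent to $H_{e+1} = pH_e$.

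For part (1) I would apply Lemma \ref{obere Abschaetzung} with $d = w$ to get $\mu_f(p) \leq \lceil (wp - w + 1)/w\rceil = p$, and set $h := p - \mu_f(p) \geq 0$. If $h \geq 1$, then $1 \leq h \leq p-1$ (the upper inequality because $\mu_f(p) \geq 1$), so $p \nmid h$, and hence $p \nmid \mu_f(p)$. Lemma \ref{Abschaetzung p kein Teiler} with $d = w$ then forces $\mu_f(p) \geq p + 1 - n$, i.e.\ $h \leq n-1$.

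For part (2), iterating the inequality $H_{e+1} \geq pH_e$ yields $H_e \geq p^{e-1} h$. Suppose $p^e \geq n-1$ and, for contradiction, that $\mu_f(p^{e+1}) < p\mu_f(p^e)$, so $H_{e+1} \in [pH_e + 1,\, pH_e + p - 1]$. Then $p \nmid H_{e+1}$, hence $p \nmid \mu_f(p^{e+1})$, so Lemma \ref{Abschaetzung p kein Teiler} gives $H_{e+1} \leq n - 1$. Combining this with $H_{e+1} \geq pH_e + 1 \geq p^e h + 1$ produces $p^e h \leq n - 2$. If $h \geq 1$ this contradicts $p^e \geq n - 1$. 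The remaining case $h = 0$ does not fall out of this estimate and must be treated separately using Lemma \ref{Liften mu_f}(1): since $\frac{\mu_f(p) - 1}{p-1} = 1 = \frac{w}{d}$, that lemma propagates $\mu_f(p^e) = p^e$ for all $e \geq 1$, so $H_{e+1} = 0 = pH_e$, again contradicting the strict inequality. Part (3) is then immediate: for $p \geq n-1$ part (2) applies already at $q = p$, and an easy induction gives $\mu_f(p^e) = p^{e-1}(p - h)$, so $\fpt(f) = \lim_{e\to\infty}(p-h)/p = 1 - h/p$.

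The main subtlety lies in part (2). Lemma \ref{Abschaetzung p kein Teiler} is only useful when $p \nmid \mu_f(p^{e+1})$, and the key observation is that the length-$p$ window $H_{e+1} \in [pH_e,\, pH_e + p - 1]$ contains exactly one multiple of $p$ (namely the endpoint $pH_e$), so the assumption $H_{e+1} > pH_e$ automatically places $H_{e+1}$ outside $p\mathbb{Z}$ and thereby unlocks the lemma. Coupling this with the telescoped lower bound $H_e \geq p^{e-1} h$ is what finally converts the constraint on $H_e$ into a constraint on $p^e$; the $h = 0$ edge case, where this conversion is vacuous, is precisely where Lemma \ref{Liften mu_f}(1) must take over.
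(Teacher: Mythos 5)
Your proof is correct and follows essentially the same route as the paper: both hinge on the observation that $\mu_f(pq) < p\mu_f(q)$ forces $p \nmid \mu_f(pq)$ (unlocking Lemma \ref{Abschaetzung p kein Teiler}), combine this with the telescoped upper bound coming from $\mu_f(p) \leq p-1$, and dispatch the $h=0$ case separately via Lemma \ref{Liften mu_f}(1). Your $H_e$-bookkeeping and the explicit ``length-$p$ window'' phrasing are a clean repackaging of the paper's inequalities $pq+1-n \leq \mu_f(pq) \leq pq-q-1$, not a different argument.
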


\begin{proof}
First, suppose $\mu_f(p)=p$. Then by Lemma \ref{Liften mu_f} (1) we get $\mu_f(q)=q$ for all $q$, so the first two assertions follow.

Now, suppose $\mu_f(p)<p$. Then by Lemma \ref{Abschaetzung p kein Teiler} it follows that
$$\mu_f(p) \geq \frac{w(p+1)-nw}{w}=p+1-n,$$
which gives $\mu_f(p)=p-h$ with $0 < h \leq n-1$. To prove the second assertion, suppose $\mu_f(pq) < p \mu_f(q)$ (see equation \eqref{mu_f(pq)}). Then $p \nmid \mu_f(pq)$, since otherwise $\mu_f(pq)=p \mu_f(q)$ by equation \eqref{Abrundung}. Thus Lemma \ref{Abschaetzung p kein Teiler} yields
$$\mu_f(pq) \geq pq+1-n.$$
Since $\mu_f(p) \leq p-1$, it follows with equation \eqref{mu_f(pq)} that $\mu_f(q) \leq q- \frac{q}{p}$ and therefore $$\mu_f(pq) \leq p \mu_f(q)-1 \leq pq-q-1.$$
Combining these two results we get
$$pq+1-n \leq \mu_f(pq) \leq pq-q-1,$$
which is equivalent to $q \leq n-2$.
The third assertion easily follows from (1) and (2).
\end{proof}

In the homogeneous case, Bhatt and Singh \cite{BS} relate the integer $h$ that appears in Theorem \ref{theorem f-schwelle} to the so-called Hasse invariant.
The aim of the next two sections is to answer the following question: What is $h$ in the quasi-homogeneous case? For this, we first consider the case of a curve and then we pass on to the case $n>2$.

\section{The case of a curve}

In this section let $R=K[x,y,z]$, where $K$ is perfect and let $\mathfrak{m}=(x,y,z)$ be the maximal ideal of $R$. Let$$\deg(x)=\alpha_x, \deg(y)=\alpha_y \text{ and } \deg(z)=\alpha_z$$ and let $f \in R$ be a quasi-homogeneous polynomial of degree $w=\alpha_x+ \alpha_y + \alpha_z$ and type $\alpha=(\alpha_x, \alpha_y, \alpha_z)$ with an isolated singularity. By $$C=\Proj(R/fR)$$ we denote the curve given by $f$. Then by \cite[Proposition 3.3 and 3.4]{Reid} the curve $C$ is in fact projective, thus by \cite[Exposé III, 4.4.]{CL} $C$ is ordinary if and only if the map $$F:H^1(C, \mathcal{O}_C) \rightarrow H^1(C, \mathcal{O}_C)$$ induced by Frobenius is bijective. Since $K$ is perfect, $F$ is bijective if and only if $F$ is injective.

In order to prove the main theorem of this section, we need the following two lemmata.

\begin{lemma} \label{LemmaKoeff}
If the coefficient of $(xyz)^{p-1}$ in $f^{p-1}$ is nonzero, then $\fpt(f)=1$. Otherwise $\fpt(f)=1- \frac{1}{p}$.
\end{lemma}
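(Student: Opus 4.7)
The plan is to combine Theorem \ref{theorem f-schwelle} with a direct graded calculation in $R/\mathfrak{m}^{[p]}$. Applied with $n=2$, Theorem \ref{theorem f-schwelle} gives $\mu_f(p)=p-h$ for some $h\in\{0,1\}$ and $\fpt(f)=1-h/p$, so $\fpt(f)\in\{1,\,1-\tfrac{1}{p}\}$. By definition of $\mu_f(p)$, we have $h=0$, i.e.\ $\fpt(f)=1$, precisely when $f^{p-1}\notin \mathfrak{m}^{[p]}$; otherwise $h=1$ and $\fpt(f)=1-\tfrac{1}{p}$. Thus the entire lemma reduces to detecting whether the image of $f^{p-1}$ in $R/\mathfrak{m}^{[p]}$ vanishes.

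Next I would identify this image. Since $f$ is quasi-homogeneous of degree $w=\alpha_x+\alpha_y+\alpha_z$, the polynomial $f^{p-1}$ lies in $R_{(p-1)w}$. Any monomial $x^{a}y^{b}z^{c}$ surviving modulo $\mathfrak{m}^{[p]}$ must satisfy $0\le a,b,c\le p-1$ together with the degree constraint $a\alpha_x+b\alpha_y+c\alpha_z=(p-1)w$. Substituting $a'=(p-1)-a$, $b'=(p-1)-b$, $c'=(p-1)-c$, this becomes
\[
a'\alpha_x+b'\alpha_y+c'\alpha_z=0,\qquad a',b',c'\ge 0,
\]
and since $\alpha_x,\alpha_y,\alpha_z>0$ this forces $a'=b'=c'=0$, i.e.\ $(a,b,c)=(p-1,p-1,p-1)$. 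Hence $(xyz)^{p-1}$ is the unique monomial in $R_{(p-1)w}$ not killed by $\mathfrak{m}^{[p]}$, and the image of $f^{p-1}$ in $R/\mathfrak{m}^{[p]}$ is exactly the coefficient of $(xyz)^{p-1}$ in $f^{p-1}$ times the class of $(xyz)^{p-1}$.

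Combining these two steps yields $\fpt(f)=1$ iff this coefficient is nonzero, and $\fpt(f)=1-\tfrac{1}{p}$ otherwise, which is the claim. The argument is essentially a one-line socle calculation once Theorem \ref{theorem f-schwelle} is invoked, and no serious obstacle arises; the only point requiring care is the observation that the matching $\deg(f^{p-1})=(p-1)w=\deg\bigl((xyz)^{p-1}\bigr)$ together with the exponent bounds $\le p-1$ pins down the socle monomial uniquely, a feature specific to $\deg f=w$.
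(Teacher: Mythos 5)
Your proof is correct and follows essentially the same route as the paper: reduce to deciding whether $f^{p-1}\in\mathfrak{m}^{[p]}$ via the dichotomy $\mu_f(p)\in\{p-1,p\}$, and observe that the degree constraint $\deg f^{p-1}=(p-1)w$ together with the exponent bounds forces $(xyz)^{p-1}$ to be the only monomial of that degree outside $\mathfrak{m}^{[p]}$. Your version is a bit more direct — it cites Theorem~\ref{theorem f-schwelle} for the full dichotomy and spells out the elementary degree count, whereas the paper routes the same monomial observation through the local cohomology reformulation of Lemma~\ref{interpretation mu} and rebuilds the dichotomy from Lemma~\ref{Liften mu_f} — but the underlying argument is identical.
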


\begin{proof}
First, suppose that the coefficient of $(xyz)^{p-1}$ in $f^{p-1}$ is nonzero. This means that the coefficient of $\frac{1}{xyz}$ in $\frac{f^{p-1}}{(xyz)^p}$ is nonzero. By Lemma \ref{interpretation mu} this is equivalent to $\mu_f(p)>p-1.$ Since $\mu_f(p) \leq p$, we get that $\mu_f(p)=p$. Therefore, $$\mu_f(q)=q$$ by Lemma \ref{Liften mu_f} (1) and it follows $\fpt(f)=1$.

Now, let the coefficient of $(xyz)^{p-1}$ in $f^{p-1}$ be zero, which implies that the coefficient of $\frac{1}{xyz}$ in $\frac{f^{p-1}}{(xyz)^p}$ is zero. Again, by Lemma \ref{interpretation mu} this is equivalent to $\mu_f(p) \leq p-1$, thus $$\frac{\mu_f(p)}{p} \leq 1-\frac{1}{p} <1.$$ By Lemma \ref{Liften mu_f} (2) it follows that the sequence $\left\{\frac{\mu_f(q)}{q}\right\}_q$ is constant. Since $\fpt(f) \in \left\{1,1-\frac{1}{p}\right\}$ by Theorem \ref{theorem f-schwelle} (3), this gives $\fpt(f)=\frac{\mu_f(p)}{p}=1-\frac{1}{p}$.
\end{proof}

If $f \in K\left[x_0, \ldots, x_n\right]$ is quasi-homogeneous of degree $w=\sum\limits_{i=0}^n \alpha_i$ (where $\alpha_i=\deg \left(x_i \right)$), then a similar argument as above shows: If $p \geq w(n-2)+1$, then $\fpt(f)=1$ if and only if the coefficient of $(x_0 \cdots x_n)^{p-1}$ in $f^{p-1}$ is nonzero.

\begin{lemma}\label{LemmaMainThm}
Let $C=\Proj(R/fR)$ be the curve given by the quasi-homogeneous polynomial $f \in K[x,y,z]$ of degree $w$ and type $\alpha$ with an isolated singularity. Then $H^1(C, \mathcal{O}_C) = \Soc \left( H^2_\mathfrak{m}(R/fR) \right)$.
\end{lemma}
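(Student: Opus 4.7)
The plan is to combine two ingredients: (i) the standard comparison between coherent cohomology on $\Proj R = \mathbb{P}(\alpha_x, \alpha_y, \alpha_z)$ and graded local cohomology, which will identify $H^1(C, \mathcal{O}_C)$ with the degree zero piece of $H^2_\mathfrak{m}(R/fR)$, and (ii) an explicit computation inside $H^3_\mathfrak{m}(R)$ which shows that this degree zero piece coincides with the socle.

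First I would invoke the isomorphism $\bigoplus_{d} H^i(\Proj R, \widetilde{M}(d)) \cong H^{i+1}_\mathfrak{m}(M)$ for $i \geq 1$, valid for finitely generated graded $R$-modules $M$, together with $\mathcal{O}_C = \widetilde{R/fR}$. Taking $M = R/fR$, $i = 1$, and extracting the degree zero part gives $H^1(C, \mathcal{O}_C) \cong H^2_\mathfrak{m}(R/fR)_0$. Next, the short exact sequence $0 \to R(-w) \xrightarrow{f} R \to R/fR \to 0$ combined with the vanishing $H^i_\mathfrak{m}(R) = 0$ for $i < 3$ gives an injection $H^2_\mathfrak{m}(R/fR) \hookrightarrow H^3_\mathfrak{m}(R)(-w)$ whose image is the kernel of multiplication by $f$.

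Now I would work in the standard basis of $H^3_\mathfrak{m}(R)$ consisting of classes $\bigl[\frac{1}{x^a y^b z^c}\bigr]$ with $a,b,c \geq 1$ and internal degree $-\alpha_x a - \alpha_y b - \alpha_z c$. The degree zero part of $H^3_\mathfrak{m}(R)(-w)$ is the degree $-w$ part of $H^3_\mathfrak{m}(R)$, which by positivity of the $\alpha_i$ and $w = \alpha_x + \alpha_y + \alpha_z$ is one-dimensional, spanned by $\bigl[\frac{1}{xyz}\bigr]$. Since $f \in \mathfrak{m}$, every monomial $x^i y^j z^k$ occurring in $f$ has $(i,j,k) \neq 0$, so the product $\frac{x^i y^j z^k}{xyz} = x^{i-1} y^{j-1} z^{k-1}$ has some non-negative exponent and therefore vanishes in $H^3_\mathfrak{m}(R)$; hence $\bigl[\frac{1}{xyz}\bigr]$ lies in the kernel of multiplication by $f$, and we obtain $H^2_\mathfrak{m}(R/fR)_0 = K \cdot \bigl[\frac{1}{xyz}\bigr]$.

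Finally, the socle computation already recorded in the proof of Lemma \ref{interpretation mu} gives $\Soc(H^3_\mathfrak{m}(R)) = K \cdot \bigl[\frac{1}{xyz}\bigr]$, so after the twist $\Soc(H^3_\mathfrak{m}(R)(-w))$ is one-dimensional and sits in degree zero. Since the socle of the submodule $H^2_\mathfrak{m}(R/fR)$ is contained in the ambient socle, we get $\Soc(H^2_\mathfrak{m}(R/fR)) \subseteq K \cdot \bigl[\frac{1}{xyz}\bigr] = H^2_\mathfrak{m}(R/fR)_0$, and the reverse inclusion is automatic because $\bigl[\frac{1}{xyz}\bigr]$ is already annihilated by $\mathfrak{m}$. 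Chaining these equalities produces $H^1(C, \mathcal{O}_C) = \Soc(H^2_\mathfrak{m}(R/fR))$. The only point that requires care beyond a direct computation is step (i), the comparison with sheaf cohomology on a weighted projective space; the remainder of the argument is a straightforward degree count in the explicit graded basis of $H^3_\mathfrak{m}(R)$.
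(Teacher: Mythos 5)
Your proposal is correct and takes essentially the same approach as the paper: pass from $H^1(C,\mathcal{O}_C)$ to $H^2_\mathfrak{m}(R/fR)_0$ via graded local cohomology, embed $H^2_\mathfrak{m}(R/fR)$ into $H^3_\mathfrak{m}(R)(-w)$ via the short exact sequence $0 \to R(-w) \to R \to R/fR \to 0$, and then observe that both the degree-zero piece and the socle coincide with the one-dimensional span of $\left[\frac{1}{xyz}\right]$. The only minor stylistic difference is that you verify directly that $f\cdot\left[\frac{1}{xyz}\right]=0$ to pin down the degree-zero piece, whereas the paper instead argues abstractly from $H^2_\mathfrak{m}(R/fR)\neq 0$ that the socle of this Artinian submodule is forced to coincide with the one-dimensional socle of the ambient module; both routes are valid and of comparable length.
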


\begin{proof}
Using local cohomology \cite[Theorem 13.21]{24hours}, we get
$$H^1(C, \mathcal{O}_C)=H^2_\mathfrak{m}(R/fR)_0$$  
thus it is enough to show that $H^2_\mathfrak{m}(R/fR) _0=\Soc \left( H^2_\mathfrak{m}(R/fR) \right)$.
By Lemma \ref{interpretation mu} we know that $H^2_\mathfrak{m}(R/fR)$ is a submodule of $H^3_\mathfrak{m}(R)(-w)$ and we know that 
$$\Soc\left(H^{3}_{\mathfrak{m}}(R)(-w)\right)=\left\langle \left[\frac{1}{xyz}\right]\right\rangle,$$
which is the degree zero part of $H^{3}_{\mathfrak{m}}(R)(-w)$.
Since $$\mathfrak{m} \cdot H^2_\mathfrak{m}(R/fR)_0 \subset H^2_\mathfrak{m}(R/fR)_{>0}=0,$$ it follows that $H^2_\mathfrak{m}(R/fR)_0 \subseteq \Soc \left( H^2_\mathfrak{m}(R/fR) \right)$. Furthermore, since $H^2_\mathfrak{m}(R/fR) \neq 0$, it follows 
$$\Soc \left( H^2_\mathfrak{m}(R/fR) \right) = H^2_\mathfrak{m}(R/fR) \cap \Soc \left( H^{3}_{\mathfrak{m}}(R)(-w) \right) \neq 0$$
(see Lemma \ref{interpretation mu}).
Therefore, $H^2_\mathfrak{m}(R/fR) _0 = \Soc \left( H^2_\mathfrak{m}(R/fR) \right)$.
\end{proof}

Using these two lemmata, we deduce the main theorem of this section, which generalizes the two-dimensional case of the main theorem of \cite{BS} to the quasi-homogeneous case.

\begin{theorem} \label{MainTheorem}
Let $C=\Proj(R/fR)$ be the curve given by the quasi-homogeneous polynomial $f \in K[x,y,z]$ of degree $w$ and type $\alpha$ with an isolated singularity. Then
$$\fpt(f)= \begin{cases} 1, &\mbox{if  $C$ is ordinary} \\
1-\frac{1}{p}, & \mbox{otherwise.} \end{cases}$$
\end{theorem}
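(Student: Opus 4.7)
The plan is to combine the two preceding lemmata by identifying the Frobenius action on $H^1(C,\mathcal{O}_C)$ explicitly in terms of the local cohomology model, and recognize that Frobenius injectivity is governed by the very same coefficient that controls $\fpt(f)$ in Lemma \ref{LemmaKoeff}.

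First, I would use Lemma \ref{LemmaMainThm} to replace $H^1(C,\mathcal{O}_C)$ by $H^2_\mathfrak{m}(R/fR)_0 = \Soc(H^2_\mathfrak{m}(R/fR))$. Since $f$ is a non-zero-divisor on $R$ and $R$ is Cohen-Macaulay of dimension $3$, the short exact sequence $0 \to R(-w) \xrightarrow{f} R \to R/fR \to 0$ gives an identification
\begin{equation*}
H^2_\mathfrak{m}(R/fR) \;\cong\; \Ker\bigl(f: H^3_\mathfrak{m}(R)(-w) \to H^3_\mathfrak{m}(R)\bigr).
\end{equation*}
Computing the degree $-w$ part of $H^3_\mathfrak{m}(R)$ with basis $\{[x^a y^b z^c] : a,b,c<0\}$ and using $a\alpha_x+b\alpha_y+c\alpha_z=w$ with $\alpha_i>0$ forces $(a,b,c)=(-1,-1,-1)$, so $H^3_\mathfrak{m}(R)(-w)_0$ is spanned by $[1/(xyz)]$. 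Moreover $f\cdot[1/(xyz)]=0$ in $H^3_\mathfrak{m}(R)_0$ since $f$ has no constant term, so this generator indeed lies in $H^2_\mathfrak{m}(R/fR)_0$.

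Next, I would translate the Frobenius $F:\mathcal{O}_C \to \mathcal{O}_C$ (well-defined on $R/fR$ because $(r+fs)^p = r^p + f^p s^p$ in characteristic $p$) into the above model. As in the proof of Lemma \ref{interpretation mu}, the lift of $F$ from $R/fR$ to $R/fR$ (via $R/fR \to R/f^p R \to R/fR$, i.e.\ $\widetilde{F^1_1}$ with $e=t=1$, $q=p$) fits into a commutative diagram with the vertical map on $H^3_\mathfrak{m}(R)(-w)$ given by $f^{p-1}\circ F$. Applying this to $[1/(xyz)]$ yields
\begin{equation*}
\widetilde{F^1_1}\bigl([1/(xyz)]\bigr) \;=\; \bigl[f^{p-1}/(xyz)^p\bigr] \;\in\; H^3_\mathfrak{m}(R)(-w)_0,
\end{equation*}
and in the 1-dimensional space $K\cdot[1/(xyz)]$ the scalar is precisely the coefficient of $x^{p-1}y^{p-1}z^{p-1}$ in $f^{p-1}$ (this being the unique monomial of $f^{p-1}$ contributing to the $[1/(xyz)]$-component in $H^3_\mathfrak{m}(R)$).

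Finally, combining these identifications, the Frobenius $F$ on the 1-dimensional space $H^1(C,\mathcal{O}_C)$ is multiplication by the coefficient of $(xyz)^{p-1}$ in $f^{p-1}$. Since $K$ is perfect, $C$ is ordinary iff this scalar is nonzero, which by Lemma \ref{LemmaKoeff} is exactly the dichotomy $\fpt(f)=1$ versus $\fpt(f)=1-1/p$. The main subtlety — and the step I expect to require the most care — is verifying that the diagram from Lemma \ref{interpretation mu} correctly identifies the Frobenius on $H^1(C,\mathcal{O}_C)$ with the map $f^{p-1}\circ F$ on $H^3_\mathfrak{m}(R)(-w)$ under the embedding from the long exact sequence, rather than some twisted or shifted variant.
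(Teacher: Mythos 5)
Your proposal is correct and follows essentially the same route as the paper: apply Lemma \ref{LemmaMainThm} to pass to $\Soc(H^2_\mathfrak{m}(R/fR))$, use the commutative diagram from Lemma \ref{interpretation mu} to identify Frobenius on $H^1(C,\mathcal{O}_C)$ with the scalar given by the coefficient of $(xyz)^{p-1}$ in $f^{p-1}$, then conclude by Lemma \ref{LemmaKoeff}. The ``subtlety'' you flag at the end is precisely what the proof of Lemma \ref{interpretation mu} supplies, and your explicit computation of the generator $[1/(xyz)]$ and of the induced map $f^{p-1}\circ F$ correctly unpacks the citation in the paper's argument.
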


\begin{proof}
The curve $C$ is ordinary if and only if $F:H^1(C, \mathcal{O}_C) \rightarrow H^1(C, \mathcal{O}_C)$ is injective.
Using Lemma \ref{LemmaMainThm}, we have shown that $C$ is ordinary if and only if $$F: \Soc \left( H^2_\mathfrak{m}(R/fR) \right) \rightarrow \Soc \left( H^2_\mathfrak{m}(R/fR) \right)$$ is injective. But this is equivalent to the fact that $$\widetilde{F^1_1}: H^2_\mathfrak{m}(R/fR) \rightarrow H^2_\mathfrak{m}(R/fR)$$ is injective (see Lemma \ref{interpretation mu}). Equivalently, the coefficient of $\frac{1}{xyz}$ in $\frac{f^{p-1}}{(xyz)^p}$ (which is the coefficient of $(xyz)^{p-1}$ in $f^{p-1}$) is nonzero. By Lemma \ref{LemmaKoeff} the result follows.
\end{proof}

We conclude this section with some examples.

\begin{example} \label{examplePeriod}
Up to permutation there are three solutions $(a,b,c) \in \mathbb{N}^3$ of the equation $\frac{1}{a} + \frac{1}{b} + \frac{1}{c}=1$, namely
$$(3,3,3), (2,4,4) \text{ and } (2,3,6).$$
We will consider the corresponding elliptic singularities that occur in the classification of Arnold directly after the ADE-singularities (see for example \cite{ArnVar}):
$$\begin{aligned}
\tilde{E_6}&=P_8 : &x^3+y^3+z^3+ \lambda xyz=0,\\
\tilde{E_7}&=X_9 : &x^2+y^4+z^4+ \lambda xyz=0,\\
\tilde{E_8}&=J_{10} : &x^2+y^3+z^6+ \lambda xyz=0.
\end{aligned}$$
The aim is to compute the $F$-pure threshold of these three polynomials. In order to do this by Lemma \ref{LemmaKoeff} it is enough to compute the coefficient of $(xyz)^{p-1}$ in the $(p-1)$-th power of the respective polynomial.

Let us start with $f_{\lambda}=x^3+y^3+z^3+ \lambda xyz$, which is (quasi)-homogeneous of degree $3$ and type $\alpha=(1,1,1)$. First, we compute $f_{\lambda}^{p-1}$:
\begin{align*}
&\left(x^3+y^3+z^3+ \lambda xyz\right)^{p-1}\\
&= \sum_{n=0}^{p-1} \binom{p-1}{n} \lambda^{p-1-n} (xyz)^{p-1-n} (x^3+y^3+z^3)^n\\
&= \sum_{n=0}^{p-1} \sum_{k=0}^n \sum_{l=0}^{n-k} \binom{p-1}{n} \binom {n}{k} \binom{n-k}{l} \lambda^{p-1-n} x^{3k+p-1-n} y^{3l+p-1-n} z^{3(n-k-l)+p-1-n}.
\end{align*}
Thus, in order to compute the coefficient of $(xyz)^{p-1}$, we need to solve the following three equations:
$$3k=n, 3l=n \text{ and } 3(n-k-l)=n.$$
Using this, the coefficient of $(xyz)^{p-1}$ in $f_{\lambda}^{p-1}$ is
$$\varphi(\lambda)=\sum_{s=0}^{\left\lfloor \frac{p-1}{3}\right\rfloor} \binom{p-1}{3s} \binom{3s}{s} \binom{2s}{s} 
 \lambda^{p-1-3s}= \sum_{s=0}^{\left\lfloor \frac{p-1}{3}\right\rfloor} \frac{(3s)!}{(s!)^3} (-1)^{3s} \lambda^{p-1-3s},$$
since $\binom{p-1}{3s} \equiv (-1)^{3s} \mod p$.
Therefore,
$$\fpt(f_{\lambda})= \begin{cases} 1, &\mbox{if $\varphi(\lambda) \neq 0$} \\
1-\frac{1}{p}, & \mbox{if $\varphi(\lambda) = 0$.} \end{cases}$$

Now, let us consider $f_{\lambda}=x^2+y^4+z^4+ \lambda xyz$, which is quasi-homogeneous of degree $4$ and type $\alpha=(2,1,1)$.  
Similar to the above we compute that the coefficient of $(xyz)^{p-1}$ in $f_{\lambda}^{p-1}$ is given by 
$$\varphi(\lambda)= \sum_{s=0}^{\left\lfloor \frac{p-1}{4}\right\rfloor} \frac{(4s)!}{(2s)!(s!)^2} \lambda^{p-1-4s}.$$
Therefore,
$$\fpt(f_{\lambda})= \begin{cases} 1, &\mbox{if $\varphi(\lambda) \neq 0$} \\
1-\frac{1}{p}, & \mbox{if $\varphi(\lambda) = 0$.} \end{cases}$$

Lastly, we consider $f_{\lambda}=x^2+y^3+z^6+ \lambda xyz$, which is quasi-homogeneous of degree $6$ and type $\alpha=(3,2,1)$.  
The coefficient of $(xyz)^{p-1}$ in $f_{\lambda}^{p-1}$ is given by 
$$\varphi(\lambda)= \sum_{s=0}^{\left\lfloor \frac{p-1}{6}\right\rfloor} \frac{(6s)!}{(3s)!(2s)!(s!)} \lambda^{p-1-6s}.$$
Therefore,
$$\fpt(f_{\lambda})= \begin{cases} 1, &\mbox{if $\varphi(\lambda) \neq 0$} \\
1-\frac{1}{p}, & \mbox{if $\varphi(\lambda) = 0$.} \end{cases}$$
\end{example}

\begin{remark}
Consider the period
$$\psi\left( \lambda \right):=\frac{1}{\left(2\pi i\right)^3} \oint \frac{\lambda xyz}{f_{\lambda}} \frac{dx}{x} \frac{dy}{y} \frac{dz}{z}$$
of $f_{\lambda}=x^a+y^b+z^c+ \lambda xyz$, where $(a,b,c)$ is one of the three triples of Example \ref{examplePeriod} and $\lambda \neq 0$. One can compute that 
$$\psi\left( \lambda \right)= \sum_{n=0}^{\infty} \left(\frac{-1}{\lambda}\right)^n \left[\left(\frac{x^a+y^b+z^c}{xyz}\right)^n\right]_0,$$
where $\left[-\right]_0$ denotes the degree zero part.
One can show that the corresponding polynomial $$\psi\left( \lambda \right)=\sum_{n=0}^{\lfloor \frac{p-1}{w}\rfloor} \left(\frac{-1}{\lambda}\right)^n \left[\left(\frac{x^a+y^b+z^c}{xyz}\right)^n\right]_0$$ is equal to the polynomial $\varphi\left( \lambda \right)$ computed in Example \ref{examplePeriod}.
\end{remark}

\begin{example}
Next, we want to consider the $T_{a,b,c}$ - singularities given by $$f_{\lambda}=x^a+y^b+z^c+\lambda xyz \text{ with } \frac{1}{a} + \frac{1}{b} + \frac{1}{c}<1 \text{ and } \lambda \neq 0.$$ Since $f_{\lambda}$ is not quasi-homogeneous, we can not use Lemma \ref{LemmaKoeff}. Instead, we remember that the $F$-pure threshold of $f_{\lambda}$ was defined by $\fpt(f_{\lambda})=\lim_{e \rightarrow \infty} \frac{\mu_{f_{\lambda}}(p^e)}{p^e}$ with $\mu_{f_{\lambda}}(p^e)=\min \left\{k \in \mathbb{N} \big| f_{\lambda}^k \in \mathfrak{m}^{[p^e]}\right\}$. In the following we will show that the coefficient of $(xyz)^{q-1}$ in $f_{\lambda}^{q-1}$ is $1$, where $q=p^e$. This means that $f_{\lambda}^{q-1} \notin \mathfrak{m}^{[q]}$, but obviously $f_{\lambda}^q \in \mathfrak{m}^{[q]}$. Thus, $\mu_{f_{\lambda}}(p^e)=p^e$ and therefore $\fpt(f_{\lambda})=1$ for all $\lambda$.

Now, it remains to compute the coefficient of $(xyz)^{q-1}$ in $f_{\lambda}^{q-1}$:
\begin{align*}
&\left(x^a+y^b+z^c+ \lambda xyz\right)^{q-1}\\
&= \sum_{n=0}^{q-1} \binom{q-1}{n} \lambda^{q-1-n} (xyz)^{q-1-n} (x^a+y^b+z^c)^n\\
&= \sum_{n=0}^{q-1} \sum_{k=0}^n \sum_{l=0}^{n-k} \binom{q-1}{n} \binom {n}{k} \binom{n-k}{l} \lambda^{q-1-n} x^{ak+q-1-n} y^{bl+q-1-n} z^{c(n-k-l)+q-1-n}.
\end{align*}
We have to solve the equations
$$ak=n, bl=n \text{ and } c(n-k-l)=n$$
but since $\frac{1}{a} + \frac{1}{b} + \frac{1}{c}<1$, the third equation is never satisfied except for $n=k=l=0$. Therefore, the coefficient of $(xyz)^{q-1}$ in $f_{\lambda}^{q-1}$ is $\varphi(\lambda)=\lambda^{q-1} \equiv 1$. 
\end{example}

\section{The case $n>2$}

Now, let us come back to the situation of Theorem \ref{theorem f-schwelle}. Remember that we consider $R=K[x_0, \ldots, x_n]$ with maximal ideal $\mathfrak{m}=(x_0, \ldots, x_n)$ and let $f \in R$ be a quasi-homogeneous polynomial of degree $w=\sum\limits_{i=0}^n \alpha_i$ with an isolated singularity, where $\alpha_i=\deg \left(x_i\right)$.

Similar to the homogeneous case (\cite{BS}) we want to relate the integer $h$ that appears in Theorem \ref{theorem f-schwelle} to the order of vanishing of the Hasse invariant on some deformation space of $X=\Proj\left(R/fR\right)$. For this, let us first fix some more notation. 

We consider the family $\pi: \mathfrak{X} \rightarrow \Hyp_{w}$ of hypersurfaces of degree $w$ in the weighted projective space $\mathbb{P}^n \left( \alpha_0, \ldots, \alpha_n \right)$.
Our chosen hypersurface $X=\Proj\left(R/fR\right)$ gives a point $\left[X\right]$ in $\Hyp_{w}$.
Set 
$$G := \sum\limits_{i=1}^m \tilde{s}_i g_i \in K[x_0, \ldots, x_n, \tilde{s}_1, \ldots, \tilde{s}_m],$$ where $\left\{g_1, \ldots, g_m\right\} \subset K[x_0, \ldots, x_n]$ is the set of monomials of degree $w$ and we set $\deg(\tilde{s}_i):=0$ for $1 \leq i \leq m$ (such that $G$ is quasi-homogeneous of degree $w$). 
The family $\pi$ of hypersurfaces of degree $w$ in $\mathbb{P}^n \left( \alpha_0, \ldots, \alpha_n \right)$ is given by
\begin{equation} \label{Fakt pi}
\begin{tikzcd}
      \mathfrak{X}=\Proj_{\mathbb{P}^{m-1}}\left( \mathcal{O}_{\mathbb{P}^{m-1}} \left[x_0, \ldots, x_n\right]/ G \right) \arrow{dr}{\pi} \arrow{r}{i} & \Proj_{\mathbb{P}^{m-1}}\left( \mathcal{O}_{\mathbb{P}^{m-1}} \left[x_0, \ldots, x_n\right] \right) 
			\arrow{d} \\
			 & \mathbb{P}^{m-1},
\end{tikzcd}
\end{equation}
where $i$ is a closed immersion.
If $f=\sum\limits_{i=1}^m f_i g_i$, $f_i \in K$, is the defining equation of $X$ in the weighted projective space $\mathbb{P}^n \left( \alpha_0, \ldots, \alpha_n \right)$, then $X$ is the fiber over $\left( f_1, \ldots, f_m \right)$, i.e. $\left[X\right] \in \Hyp_w$ is equal to $V\left(\tilde{s}_i - f_i | i \in \left\{1, \ldots, m\right\}\right)$.
The aim of this section is to prove the following theorem:

\begin{theorem}\label{Hassethm}
If $p \geq w(n-2)+1$, then the integer $h$ in Theorem \ref{theorem f-schwelle} is the order of vanishing of the Hasse invariant at $\left[X\right] \in \Hyp_{w}$ on the deformation space $\mathfrak{X}$ of $X \subset \mathbb{P}^n \left(\alpha_0, \ldots, \alpha_n \right)$ described above.
\end{theorem}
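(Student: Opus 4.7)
The plan is to translate both $h$ and $\ord_{[X]}(H)$ into the same algebraic condition, namely the containment of certain elements in $\mathfrak{m}^{[p]}$, and then compare them. Combining Theorem \ref{theorem f-schwelle} with Lemma \ref{interpretation mu} (applied with $e=1$, $q=p$), $h$ is determined by: $\widetilde{F^1_t}$ is injective iff $t\geq h+1$, equivalently $f^{p-h}\in\mathfrak{m}^{[p]}$ while $f^{p-h-1}\notin\mathfrak{m}^{[p]}$. The hypothesis $p\geq w(n-2)+1$ enters via Lemma \ref{Liften mu_f}(2) to fix $\fpt(f)=1-h/p$.

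Next, I would compute the Hasse invariant explicitly using the universal family (\ref{Fakt pi}). Set $A:=\mathcal{O}_{\Hyp_{w},[X]}$ with maximal ideal $\mathfrak{n}=(\sigma_1,\ldots,\sigma_m)$, $\sigma_i:=\tilde{s}_i-f_i$, so that $G=f+\sum_i\sigma_i g_i\in A[x_0,\ldots,x_n]$. Applying the local-cohomology argument of Lemma \ref{interpretation mu} relative to $A$, the sheaf $R^{n-1}\pi_{\ast}\mathcal{O}_{\mathfrak{X}}$ is near $[X]$ a free rank-one $A$-module generated by $[\tfrac{1}{x_0\cdots x_n}]\in H^n_{\mathfrak{m}}(A[x]/G)_0$, and the relative Frobenius sends this generator to $[\tfrac{G^{p-1}}{(x_0\cdots x_n)^p}]$. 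After trivializing the target by the same generator, the Hasse invariant $H\in A$ equals the coefficient of $(x_0\cdots x_n)^{p-1}$ in $G^{p-1}$. Expanding
\begin{equation*}
G^{p-1}=\sum_{k_0+|k|=p-1}\binom{p-1}{k_0,k_1,\ldots,k_m}f^{k_0}\prod_i\sigma_i^{k_i}g_i^{k_i}
\end{equation*}
and noting that in quasi-degree $w(p-1)$ the only monomial with every exponent $<p$ is $(x_0\cdots x_n)^{p-1}$ (since $\sum\alpha_i c_i=w(p-1)$ with $c_i\leq p-1$ forces $c_i=p-1$ for all $i$), the $\sigma^k$-coefficient of $H$ is nonzero iff $f^{p-1-|k|}\prod_i g_i^{k_i}\notin\mathfrak{m}^{[p]}$. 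Hence
\begin{equation*}
\ord_{[X]}(H)=\min\bigl\{j\geq 0:\ f^{p-1-j}g_{i_1}\cdots g_{i_j}\notin\mathfrak{m}^{[p]}\text{ for some }(i_1,\ldots,i_j)\bigr\}.
\end{equation*}

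It remains to identify this minimum with $h$. The inequality $\ord_{[X]}(H)\geq h$ is immediate: for $j<h$ one has $p-1-j\geq p-h$, so $f^{p-1-j}\in\mathfrak{m}^{[p]}$ by the definition of $h$, forcing every product $f^{p-1-j}g_{i_1}\cdots g_{i_j}\in\mathfrak{m}^{[p]}$. For the reverse inequality, since $f^{p-1-h}\notin\mathfrak{m}^{[p]}$ there is a monomial $x^a$ appearing in $f^{p-1-h}$ with all $a_i<p$; I would then look for degree-$w$ monomials whose product equals $x_0^{p-1-a_0}\cdots x_n^{p-1-a_n}\in R_{wh}$, so that the product with $f^{p-1-h}$ has nonzero $(x_0\cdots x_n)^{p-1}$-coefficient. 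This factorization step is where I expect the main technical obstacle to lie: in the homogeneous case of \cite{BS} such factorizations are automatic, but under a weighted grading the $K$-span of $h$-fold products of degree-$w$ monomials can be a proper subspace of $R_{wh}$ (e.g.\ in low-variable weighted examples the monomial $y^3\in R_{2w}$ cannot arise as a product of two degree-$w$ monomials). To overcome this I would pass from the monomial-by-monomial picture to a linear-algebra argument over this $K$-span and combine the isolated singularity hypothesis (via Lemma \ref{max Grad Jacobi}, which produces ample monomials in $f^{p-1-h}$) with the bound $p\geq w(n-2)+1$ to show that the multiplication map by $f^{p-1-h}$ into the one-dimensional space $(R/\mathfrak{m}^{[p]})_{w(p-1)}$ cannot vanish on the full span of $h$-fold products, thereby establishing $\ord_{[X]}(H)=h$.
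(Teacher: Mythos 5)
Your approach differs genuinely from the paper's. Rather than writing the Hasse invariant explicitly as a polynomial in the deformation parameters, the paper builds a commutative diagram comparing the local cohomology of the universal thickenings $\tilde{R}/(F,s^t)$ with that of the infinitesimal thickenings $R/f^t$, reduces the theorem to the chain of four equalities (\ref{Gleichheiten}), and proves the hard one (the second equality) by showing the comparison maps $c_t$ are injective; that injectivity is in turn established through graded local duality, converting it into the surjectivity of a map $\psi_t$ between $\Ext^1$-modules. Your first half is correct and gives a cleaner picture than the paper's: the identification of $h$ via Lemma \ref{interpretation mu} with $f^{p-h}\in\mathfrak{m}^{[p]}$, $f^{p-h-1}\notin\mathfrak{m}^{[p]}$; the reading of $\ord_{[X]}(H)$ as the minimal $|k|$ such that $f^{p-1-|k|}\prod_i g_i^{k_i}\notin\mathfrak{m}^{[p]}$ (the multinomial coefficients in the expansion of $G^{p-1}$ are units mod $p$ since all parts are $\leq p-1$); and the inequality $\ord_{[X]}(H)\geq h$ all check out.

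The reverse inequality $\ord_{[X]}(H)\leq h$ is a genuine gap, and you have located it correctly but not closed it. You need $f^{p-1-h}\prod_i g_i^{k_i}\notin\mathfrak{m}^{[p]}$ for some multi-index $k$ with $|k|=h$, and the monomial-by-monomial factorization that works when all $\alpha_i=1$ (pick a monomial $x^a$ appearing in $f^{p-1-h}$ with all $a_i<p$, then factor $x_0^{p-1-a_0}\cdots x_n^{p-1-a_n}$ into $h$ degree-$w$ monomials) is simply unavailable in the weighted setting: the $K$-span of $h$-fold products of degree-$w$ monomials can be a proper subspace of $R_{wh}$ (for weights $(1,1,1,2)$ with $w=5$, the degree-$10$ monomial $x_3^5$ is not a product of two degree-$5$ monomials, since each such monomial has $x_3$-exponent at most $2$). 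Your fallback paragraph describes what you would like to be true — that multiplication by $f^{p-1-h}$ into the one-dimensional space $(R/\mathfrak{m}^{[p]})_{w(p-1)}$ does not vanish on the span of $h$-fold products — but offers no argument; invoking Lemma \ref{max Grad Jacobi} and the bound on $p$ does not by itself produce one. Notably, the dual of this spanning statement is exactly where the paper's Step 3 lands as well (surjectivity of $\psi_t$ comes down to the span of the substitutions $f_j(g_1,\dots,g_m)$ together with $fR_{(t-2)w}$ exhausting $R_{(t-1)w}$), so your instinct for where the crux lies is sound; but locating the crux is not resolving it, and a complete proof must supply this argument.
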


First, let us recall the definition of the \textbf{Hasse invariant} of a family of varieties in characteristic $p$ (see for example \cite{BS}).
Fix a proper flat morphism $\pi: \mathfrak{X} \rightarrow S$ of relative dimension $N$ between noetherian $\mathbb{F}_p$-schemes and assume that the formation of $R^N \pi_{\ast} \mathcal{O}_{\mathfrak{X}}$ is compatible with base change, i.e.
if we have the following pullback diagram
\begin{equation*}
\begin{tikzcd}
      \mathfrak{X} \times_S T \arrow{r}{i^{(1)}} \arrow{d}{\pi^{(1)}} & \mathfrak{X} \arrow{d}{\pi} \\
			 T \arrow{r}{i} & S,
\end{tikzcd}
\end{equation*}
then 
$i^{\ast} R^N \pi_{\ast} \mathcal{O}_{\mathfrak{X}} \cong R^N \pi^{(1)}_{\ast} {i^{(1)}}^{\ast} \mathcal{O}_{\mathfrak{X}}$. Furthermore, assume that %$\omega_{\mathfrak{X} / S}^{-1}= 
$R^N \pi_{\ast} \mathcal{O}_{\mathfrak{X}}$ is a line bundle. 

Consider the Frobenius twist $\mathfrak{X}^{(1)}=\mathfrak{X} \times_{\Frob_S} S$ of $\mathfrak{X}$, which gives the following diagram 
\begin{equation*}
\begin{tikzcd}
 \mathfrak{X} \arrow[bend right]{rdd}{\pi} \arrow[dashed]{rd}{\Frob_{\mathfrak{X} / S}} \arrow[bend left]{rrd}{\Frob_{\mathfrak{X}}} & & \\
			 & \mathfrak{X}^{(1)} \arrow{r} \arrow{d}{\pi^{(1)}} & \mathfrak{X} \arrow{d}{\pi} \\
       & S \arrow{r}{\Frob_S} & S.
\end{tikzcd}
\end{equation*} 
By the base change assumption we have
$$R^N \pi_{\ast}^{(1)} \mathcal{O}_{\mathfrak{X}^{(1)}} \cong \Frob_S^{\ast} R^N \pi_{\ast} \mathcal{O}_{\mathfrak{X}} \cong \left( R^N \pi_{\ast} \mathcal{O}_{\mathfrak{X}} \right)^{p}.$$

The relative Frobenius $\Frob_{\mathfrak{X} / S}$ induces a map $\mathcal{O}_{\mathfrak{X}^{(1)}} \rightarrow \left( \Frob_{\mathfrak{X} / S} \right)_{\ast} \mathcal{O}_{\mathfrak{X}}$ and this induces a map
$$H: R^N \pi_{\ast}^{(1)} \mathcal{O}_{\mathfrak{X}^{(1)}} \rightarrow R^N \pi_{\ast}^{(1)} \left( \Frob_{\mathfrak{X} / S} \right)_{\ast} \mathcal{O}_{\mathfrak{X}} \in \Hom \left(R^N \pi_{\ast}^{(1)} \mathcal{O}_{\mathfrak{X}^{(1)}}, R^N \pi_{\ast} \mathcal{O}_{\mathfrak{X}} \right),$$
which is called the Hasse invariant of the family $\pi$ (here we used that $\pi=\pi^{(1)} \circ \Frob_{\mathfrak{X} / S}$).
Since 
\begin{align*}
\Hom \left(R^N \pi_{\ast}^{(1)} \mathcal{O}_{\mathfrak{X}^{(1)}}, R^N \pi_{\ast} \mathcal{O}_{\mathfrak{X}} \right)
 &\cong \Hom \left(\left( R^N \pi_{\ast} \mathcal{O}_{\mathfrak{X}} \right)^{p}, R^N \pi_{\ast} \mathcal{O}_{\mathfrak{X}} \right) \\
&\cong \Hom\left(\mathcal{O}_S, \left( R^N \pi_{\ast} \mathcal{O}_{\mathfrak{X}} \right)^{1-p} \right)\\
&\cong H^0 \left(S, \left( R^N \pi_{\ast} \mathcal{O}_{\mathfrak{X}} \right)^{1-p} \right),
\end{align*}
the Hasse invariant $H$ is a section of a line bundle.

Next, we want to give the definition of the order of vanishing of the Hasse invariant. For this, fix $s \in S$ and an integer $t \geq 0$. Let $t[s]$ be the order $t$ neighbourhood of $s$, i.e. it is defined by the $t$-th power of the ideal defining $s$, and let $t\mathfrak{X}_s \subset \mathfrak{X}$ respectively $t\mathfrak{X}_s^{(1)} \subset \mathfrak{X}^{(1)}$ be the corresponding neighbourhoods of the fibers of $\pi$ respectively of $\pi^{(1)}$, i.e. we have the following cartesian diagrams
\begin{equation*}
\begin{tikzcd}
t\mathfrak{X}_s \arrow{r}{\widetilde{i}} \arrow{d}{\widetilde{\pi}} & \mathfrak{X} \arrow{d}{\pi} \\
       t[s] \arrow[hook]{r}{i} & S
\end{tikzcd}
\hspace{15pt} \text{ and } \hspace{15pt}
\begin{tikzcd}
t\mathfrak{X}^{(1)}_s \arrow{r}{\widetilde{i^{(1)}}} \arrow{d}{\widetilde{\pi^{(1)}}} & \mathfrak{X}^{(1)} \arrow{d}{\pi^{(1)}} \\
       t[s] \arrow[hook]{r}{i} & S.
\end{tikzcd}
\end{equation*} 

The map $\Frob_{\mathfrak{X} / S}$ induces maps $t\mathfrak{X}_s \rightarrow t\mathfrak{X}_s^{(1)}$ for all $t$ and hence maps
$$\phi_t: H^N \left( t\mathfrak{X}_s^{(1)}, \mathcal{O}_{t\mathfrak{X}_s^{(1)}} \right) \rightarrow H^N \left( t\mathfrak{X}_s, \mathcal{O}_{t\mathfrak{X}_s} \right).$$
The two diagrams above demonstrate that the maps $\phi_t$ are given by $i^{\ast}H$, where $i: t[s] \hookrightarrow S$. For this, consider
$$i^{\ast}H : i^{\ast} R^N \pi_{\ast}^{(1)} \mathcal{O}_{\mathfrak{X}^{(1)}} \rightarrow i^{\ast} R^N \pi_{\ast}^{(1)} \left( \Frob_{\mathfrak{X} / S} \right)_{\ast} \mathcal{O}_{\mathfrak{X}}.$$
Here, by the base change assumption, we have
$$i^{\ast} R^N \pi_{\ast}^{(1)} \mathcal{O}_{\mathfrak{X}^{(1)}} = R^N \widetilde{\pi^{(1)}}_{\ast} \left( \widetilde{i^{(1)}} \right)^{\ast} \mathcal{O}_{\mathfrak{X}^{(1)}} = R^N \widetilde{\pi^{(1)}}_{\ast} \mathcal{O}_{t\mathfrak{X}_s^{(1)}} = H^N \left( t\mathfrak{X}_s^{(1)}, \mathcal{O}_{t\mathfrak{X}_s^{(1)}} \right)$$
and similarly
$$i^{\ast} R^N \pi_{\ast}^{(1)} \left( \Frob_{\mathfrak{X} / S} \right)_{\ast} \mathcal{O}_{\mathfrak{X}} = i^{\ast} R^N \pi_{\ast} \mathcal{O}_{\mathfrak{X}} = R^N \widetilde{\pi}_{\ast} \widetilde{i}^{\ast} \mathcal{O}_{\mathfrak{X}} = R^N \widetilde{\pi}_{\ast} \mathcal{O}_{t\mathfrak{X}_s} = H^N \left( t\mathfrak{X}_s, \mathcal{O}_{t\mathfrak{X}_s} \right),$$
since $\pi = \pi^{(1)} \circ \Frob_{\mathfrak{X} / S}$.
Using this identification of $\phi_t$ with $i^{\ast}H$, one obtains the following lemma:

\begin{lemma}
The order of vanishing of the Hasse invariant $H$ at the point $s \in S$ is $\ord_s(H)=\max \left\{t | \phi_t=0\right\}$.
\end{lemma}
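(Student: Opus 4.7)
The plan is essentially to unpack the definition of $\ord_s(H)$ and check that, under the base change identifications already written down in the excerpt, the restricted Hasse invariant $i^{\ast}H$ agrees with the Frobenius map $\phi_t$ on the cohomology of the $t$-th order thickening of the fiber. Once that identification is verified, the equality of the two ``max'' sets is immediate, because $i^{\ast}H=0$ for a given $t$ if and only if $\phi_t=0$ for the same $t$.

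More concretely, I would proceed as follows. First, recall that by definition
\[
\ord_s(H)=\max\bigl\{t \bigm| i^{\ast}H=0,\ i: t[s]\hookrightarrow S\bigr\},
\]
so the task is to show $i^{\ast}H=0$ if and only if $\phi_t=0$. The map $i^{\ast}H$ is a morphism
\[
i^{\ast}H:\ i^{\ast} R^N\pi^{(1)}_{\ast}\mathcal{O}_{\mathfrak{X}^{(1)}}\longrightarrow i^{\ast} R^N\pi^{(1)}_{\ast}(\Frob_{\mathfrak{X}/S})_{\ast}\mathcal{O}_{\mathfrak{X}}.
\]
Using the base change assumption for the cartesian square with $\pi^{(1)}$ in the excerpt, the source is canonically identified with $R^N\widetilde{\pi^{(1)}}_{\ast}\mathcal{O}_{t\mathfrak{X}_s^{(1)}}=H^N(t\mathfrak{X}_s^{(1)},\mathcal{O}_{t\mathfrak{X}_s^{(1)}})$. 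Since $\pi=\pi^{(1)}\circ\Frob_{\mathfrak{X}/S}$, the pushforward $R^N\pi^{(1)}_{\ast}(\Frob_{\mathfrak{X}/S})_{\ast}\mathcal{O}_{\mathfrak{X}}$ coincides with $R^N\pi_{\ast}\mathcal{O}_{\mathfrak{X}}$, and applying base change again for the cartesian square involving $\pi$ identifies the target with $R^N\widetilde{\pi}_{\ast}\mathcal{O}_{t\mathfrak{X}_s}=H^N(t\mathfrak{X}_s,\mathcal{O}_{t\mathfrak{X}_s})$. Both identifications are explicit in the excerpt.

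The only genuine content is then checking that, under these identifications, $i^{\ast}H$ is exactly the map $\phi_t$ induced on cohomology by the restricted relative Frobenius $\Frob_{\mathfrak{X}/S}\colon t\mathfrak{X}_s\to t\mathfrak{X}_s^{(1)}$. This is a naturality statement: the Hasse invariant $H$ was defined as the map induced on $R^N\pi^{(1)}_{\ast}$ by the sheaf morphism $\mathcal{O}_{\mathfrak{X}^{(1)}}\to(\Frob_{\mathfrak{X}/S})_{\ast}\mathcal{O}_{\mathfrak{X}}$, and base change is compatible with the pushforward of this map because the restriction of $\Frob_{\mathfrak{X}/S}$ to $t\mathfrak{X}_s$ factors through $t\mathfrak{X}_s^{(1)}$ (the two cartesian diagrams for $\widetilde{\pi}$ and $\widetilde{\pi^{(1)}}$ share the same base $t[s]$). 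Thus the diagram of sheaves on $t[s]$ obtained by restriction coincides with the one obtained from the relative Frobenius on the thickened fibers, and taking $N$-th direct images yields $i^{\ast}H=\phi_t$.

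Having established $i^{\ast}H=\phi_t$ under the canonical identifications, the two vanishing conditions coincide for every $t$, so
\[
\ord_s(H)=\max\{t\mid i^{\ast}H=0\}=\max\{t\mid \phi_t=0\},
\]
which is the claim. The only slightly delicate point—hence the ``main obstacle''—is ensuring that the two independent invocations of base change (one for $\pi^{(1)}$ and one for $\pi$) are compatible with the sheaf-level map induced by $\Frob_{\mathfrak{X}/S}$; this is handled by the factorization $\pi=\pi^{(1)}\circ\Frob_{\mathfrak{X}/S}$ together with the fact that the thickenings $t\mathfrak{X}_s$ and $t\mathfrak{X}_s^{(1)}$ are pulled back from the same order-$t$ neighbourhood $t[s]\subset S$.
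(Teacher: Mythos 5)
Your proposal is correct and takes essentially the same approach as the paper: both arguments unpack the definition of $\ord_s(H)$, apply the base change hypothesis to the two cartesian squares over $t[s]$ to identify the source and target of $i^{\ast}H$ with $H^N(t\mathfrak{X}_s^{(1)}, \mathcal{O}_{t\mathfrak{X}_s^{(1)}})$ and $H^N(t\mathfrak{X}_s, \mathcal{O}_{t\mathfrak{X}_s})$ respectively (using $\pi = \pi^{(1)} \circ \Frob_{\mathfrak{X}/S}$ for the target), and then observe that under these identifications $i^{\ast}H$ is the map $\phi_t$ induced by the restricted relative Frobenius $t\mathfrak{X}_s \to t\mathfrak{X}_s^{(1)}$. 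The paper treats the final identification $i^{\ast}H = \phi_t$ as immediate once the diagrams are in place; you spell out the naturality argument a bit more explicitly, but the content and structure of the proof are the same.
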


Later, we will need the following reformulation of the Hasse invariant (for a proof see \cite{BS}):

\begin{lemma} \label{reformulationHasse}
If $\psi_t: H^N \left( \mathfrak{X}_s, \mathcal{O}_{\mathfrak{X}_s} \right) \rightarrow H^N \left( t\mathfrak{X}_s, \mathcal{O}_{t\mathfrak{X}_s} \right)$ induced by $\Frob_{\mathfrak{X}}$ is nonzero for some $t \leq p$, then $\ord_s(H)+1=\min \left\{ t | \psi_t \neq 0\right\}$. 
\end{lemma}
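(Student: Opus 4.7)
The plan is to exploit the factorization $\Frob_{\mathfrak{X}} = \pi_2 \circ \Frob_{\mathfrak{X}/S}$, where $\pi_2: \mathfrak{X}^{(1)} \to \mathfrak{X}$ is the canonical projection, to express $\psi_t$ as $\phi_t$ composed with an auxiliary map that sends a generator to a generator. First I would observe that for $t \leq p$ the absolute Frobenius $\Frob_{\mathfrak{X}}$ restricts to a morphism $t\mathfrak{X}_s \to \mathfrak{X}_s$: it sends the ideal of $\mathfrak{X}_s$ in $\mathfrak{X}$ to its $p$-th power, which lies inside the ideal of $t\mathfrak{X}_s$. Combining this with $\Frob_{\mathfrak{X}/S}|: t\mathfrak{X}_s \to t\mathfrak{X}_s^{(1)}$ and $\pi_2|: t\mathfrak{X}_s^{(1)} \to \mathfrak{X}_s$ produces a factorization $\psi_t = \phi_t \circ \rho_t$ on cohomology, where $\rho_t = (\pi_2|)^{\ast}: H^N(\mathfrak{X}_s, \mathcal{O}_{\mathfrak{X}_s}) \to H^N(t\mathfrak{X}_s^{(1)}, \mathcal{O}_{t\mathfrak{X}_s^{(1)}})$.

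To unwind the two factors I would work in local trivializations of $L := R^N \pi_{\ast} \mathcal{O}_{\mathfrak{X}}$. Pick a local generator $\eta$ of $L$ near $s$; then $\eta^{\otimes p}$ generates $L^p \cong \Frob_S^{\ast} L \cong R^N \pi^{(1)}_{\ast} \mathcal{O}_{\mathfrak{X}^{(1)}}$ by the base change hypothesis, and writing $H(\eta^{\otimes p}) = h \cdot \eta$ with $h \in \mathcal{O}_{S,s}$ one has $\ord_s(H) = \ord_s(h)$. Since $\phi_t = i^{\ast} H$, the map $\phi_t$ is then just multiplication by the image of $h$ in $B := \mathcal{O}_{t[s]}$. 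The crux is to identify $\rho_t$: I plan to show that the sheaf-level pullback $\pi_2^{\#}$, viewed through the commutative square $\pi \circ \pi_2 = \Frob_S \circ \pi^{(1)}$ together with the base change isomorphism $R^N \pi^{(1)}_{\ast} \mathcal{O}_{\mathfrak{X}^{(1)}} \cong \Frob_S^{\ast} L$, agrees with the canonical Frobenius-semilinear map $L \to L^p$, $\eta \mapsto \eta^{\otimes p}$. Because this semilinear map descends along $L \twoheadrightarrow L|_s$ and $L^p \twoheadrightarrow L^p|_{t[s]}$ precisely when $\mathfrak{m}_s^p \subset \mathfrak{m}_s^t$, i.e.\ when $t \leq p$, one concludes that $\rho_t(\eta|_s) = \eta^{\otimes p}|_{t[s]}$, a $B$-generator of $L^p|_{t[s]}$.

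Combining these identifications gives
\begin{equation*}
\psi_t(\eta|_s) \;=\; \phi_t\bigl(\eta^{\otimes p}|_{t[s]}\bigr) \;=\; (h \bmod \mathfrak{m}_s^t) \cdot \eta|_{t[s]}
\end{equation*}
inside the free rank-one $B$-module $L|_{t[s]}$, so $\psi_t$ is nonzero if and only if $h \notin \mathfrak{m}_s^t$, i.e.\ if and only if $t > \ord_s(H)$. The equality $\min\{t \mid \psi_t \neq 0\} = \ord_s(H) + 1$ follows immediately, and the hypothesis that some $\psi_t$ with $t \leq p$ is nonzero guarantees that this minimum lies in the range where the factorization of $\Frob_{\mathfrak{X}}$ through $\mathfrak{X}_s$ is legitimate. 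The step I expect to require the most care is the third one, the identification of $\rho_t$ with the Frobenius-semilinear generator-to-generator map; once this is established, the remainder is a routine computation in trivializations.
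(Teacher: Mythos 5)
Your proposal is correct and is essentially the argument the paper has in mind: the paper does not prove this lemma itself but defers to \cite{BS}, and your route — factoring $\Frob_{\mathfrak{X}}$ as $\pi_2 \circ \Frob_{\mathfrak{X}/S}$, noting that the restriction to the order-$t$ neighbourhood is legitimate exactly when $t \leq p$, and identifying $\rho_t$ with the Frobenius-semilinear generator-to-generator map $\eta \mapsto \eta^{\otimes p}$ via the base change isomorphism so that $\psi_t$ becomes multiplication by $h \bmod \mathfrak{m}_s^t$ — is precisely the proof given there. The one step you flag as delicate (the identification of $\rho_t$) is indeed the crux, but it does hold: the pullback $\pi_2^*$ on top cohomology is the adjunction unit $L \to \Frob_{S,*}\Frob_S^* L$ followed by the canonical isomorphism $\Frob_S^* L \cong L^{\otimes p}$, which sends $\eta$ to $\eta^{\otimes p}$.
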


Now, let us come back to our family $\pi$  of hypersurfaces of degree $w$ in \linebreak $\mathbb{P}^n \left( \alpha_0, \ldots, \alpha_n \right)$. Using diagram (\ref{Fakt pi}), one can check that $\pi$ is a proper morphism of relative dimension $n-1$ between noetherian $\mathbb{F}_p$-schemes 
and Lemma 9.3.4 of \cite{FGAexplained} shows that $\pi$ is also flat. Furthermore, one can prove that $R^{n-1} \pi_{\ast} \mathcal{O}_{\mathfrak{X}}$ 
is a line bundle (see for example \cite[p. 35]{Ogus}). The compatibility of $R^{n-1} \pi_{\ast} \mathcal{O}_{\mathfrak{X}}$ with base change follows from 
\cite[p. 51]{Mumford} or EGA III (\cite[7.7]{EGAIII}), since $H^{n} \left( \mathfrak{X}_s, \mathcal{O}_{\mathfrak{X}_s} \right)=0$.
Thus, the order of vanishing of the Hasse invariant at $\left[X\right] \in \Hyp_w$ on the space of hypersurfaces $\mathfrak{X}$ is defined.

In order to start with the proof of Theorem \ref{Hassethm}, let us first remark that it suffices to consider the affine situation, i.e. we work on the left side of the following diagram
\begin{equation*}
\begin{tikzcd}   
			\Proj_{\mathbb{A}^{m} \setminus 0}\left( \mathcal{O}_{\mathbb{A}^{m} \setminus 0} \left[x_0, \ldots, x_n\right]/ G \right) \arrow{d} \arrow{r} & \Proj_{\mathbb{P}^{m-1}}\left( \mathcal{O}_{\mathbb{P}^{m-1}} \left[x_0, \ldots, x_n\right]/ G \right) \arrow{d} 
			\\
			\left( \mathbb{A}^{m} \setminus 0\right) \times \mathbb{P}^{n}\left( \alpha_0, \ldots, \alpha_n \right) \arrow{d} \arrow{r} & \Proj_{\mathbb{P}^{m-1}}\left( \mathcal{O}_{\mathbb{P}^{m-1}} \left[x_0, \ldots, x_n\right] \right) \arrow{d} \\
			 \mathbb{A}^{m} \setminus 0 \arrow{r} & \mathbb{P}^{m-1}.
\end{tikzcd}
\end{equation*}

Remember that $f=\sum\limits_{i=1}^m f_i g_i$ is the defining equation of $X$ and $G=\sum\limits_{i=1}^m \tilde{s_i} g_i$. 
Changing coordinates via $s_i=\tilde{s}_i-f_i$, one obtains $X':=\pi^{-1} \left(\left[X\right]\right)=\Proj\left(\tilde{R}/(F,s)\tilde{R}\right)$, where $\tilde{R}=K \left[x_0, \ldots, x_n,s_1, \ldots, s_m\right]$, $F=f+\sum\limits_{i=0}^m s_i g_i$ and $s=\left(s_1, \ldots, s_m \right)$. 
Furthermore, let $tX$ respectively $tX'$ be the order $t$ neighbourhoods of $X$ in $\mathbb{P}^n \left(\alpha_0, \ldots, \alpha_n \right)$ respectively $X'$ in $\mathfrak{X}$, i.e. $$tX=\Proj\left(R/f^tR\right) \text{ and } tX'=\Proj_{K[s] / s^t}\left(\tilde{R}/(F,s^t)\tilde{R}\right).$$

\begin{proof}[Proof of Theorem \ref{Hassethm}]
For $1 \leq t \leq p$ we consider the following commutative diagram
\begin{equation*}
\begin{tikzcd}
      \tilde{R}/\left(F,s\right) \arrow{r}{\Frob_{\tilde{R}}} \arrow[-,double equal sign distance]{d} & \tilde{R}/\left(F^p,s^{[p]}\right) \arrow{r}{a} & \tilde{R}/\left(F,s^p\right) \arrow{r}{\tilde{pr}_1} & \tilde{R}/\left(F,s^{t}\right) \arrow{r}{\tilde{pr}_2} & \tilde{R}/\left(F,s\right) \arrow[-, double equal sign distance]{d} \\
R/f \arrow{r}{\Frob_R} & R/f^{[p]} \arrow[-, double equal sign distance]{r} \arrow{u}{h_1} & R/f^{p} \arrow{r}{pr_1} \arrow{u}{h_2} & R/f^t \arrow{r}{pr_2} \arrow{u}{h_3} & R/f,
\end{tikzcd}
\end{equation*}
where the maps $\tilde{pr}_1, \tilde{pr}_2, pr_1$ and $pr_2$ are the evident projections and the map $a$ is given by the inclusion $\left(F^p, s^{[p]}\right) \subset \left(F,s^p\right)$, since $\left(s_1^p, \ldots, s_m^p\right) \subset \left(s_1, \ldots, s_m\right)^p$. The maps $h_1, h_2$ and $h_3$ are defined as follows:
Obviously, we have a map $\varphi_1: R \hookrightarrow \tilde{R} \twoheadrightarrow \tilde{R}/F^p \twoheadrightarrow \tilde{R}/\left(F^p,s^{[p]}\right)$, which induces the map $h_1$ if and only if $f^p \in \Ker\left(\varphi_1\right)=\left(F^p,s^{[p]}\right)$, which follows from
$$f^p=\left(F - \sum_{i=1}^m s_ig_i\right)^p=F^p + \left(-\sum_{i=1}^m s_ig_i\right)^p \in \left(F^p,s^{[p]}\right).$$
Similarly, we have a map $\varphi_2: R \hookrightarrow \tilde{R} \twoheadrightarrow \tilde{R}/F \twoheadrightarrow \tilde{R}/\left(F,s^t\right)$, which induces the map $h_3$ if and only if $f^t \in \Ker\left(\varphi_2\right)=\left(F,s^t\right)$. But this is true, since
$$f^t=\left(F - \sum_{i=1}^m s_ig_i\right)^t=h \cdot F + g \cdot \left(\sum_{i=1}^m s_ig_i\right)^t \in \left(F,s^t\right)$$
for some $h, g \in \tilde{R}$. The same argument for $t=p$ gives $h_2$. 

Passing to cohomology and taking the degree zero parts yields the following commutative diagram
\begin{equation}\label{c_t}
\begin{tikzcd}
     H^n_{\mathfrak{m}}\left(\tilde{R}/\left(F,s\right)\right)_0 \arrow{r}{b_t} \arrow[-,double equal sign distance]{d} & H^n_{\mathfrak{m}}\left(\tilde{R}/\left(F,s^t\right)\right)_0 \\
H^n_{\mathfrak{m}}\left(R/f\right)_0 \arrow{r}{a_t} & H^n_{\mathfrak{m}}\left(R/f^t\right)_0 \arrow{u}{c_t}.
\end{tikzcd}
\end{equation}

In order to prove Theorem \ref{Hassethm}, we now show the following four equalities
\begin{align} \label{Gleichheiten}
\ord_s(H)+1 &=
 \min \left\{t  \;|\; H^{n-1}\left(X', \mathcal{O}_{X'}\right) \stackrel{b_t}{\longrightarrow} H^{n-1}\left(tX', \mathcal{O}_{tX'}\right) \text{ injective }\right\} \notag \\
&= \min \left\{t \;|\; H^{n-1}\left(X, \mathcal{O}_{X}\right) \stackrel{a_t}{\longrightarrow} H^{n-1}\left(tX, \mathcal{O}_{tX}\right) \text{ injective }\right\}\\
&= \min \left\{t \;|\; H^{n}_{\mathfrak{m}}\left(R/fR\right) \stackrel{\widetilde{F^1_t}}{\longrightarrow} H^{n}_{\mathfrak{m}}\left(R/f^tR\right) \text{ injective }\right\} \notag \\
&=h+1 \notag
\end{align}
and then clearly we get $h=\ord_s(H)$ (here we used \cite[Theorem 13.21]{24hours} to replace local cohomology by sheaf cohomology).

The first equality follows by Lemma \ref{reformulationHasse}. For the proof of the third equation we need the following lemma about the injectivity of Frobenius on the negatively graded part of local cohomology modules:

\begin{lemma} \label{Frob injective}
Let $f \in R$ be a quasi-homogeneous polynomial of degree $d$ and type $\alpha$ with an isolated singularity. Let $t \leq p$. Then for $p \geq nd-w-td+1$ the Frobenius action
$$\widetilde{F^1_t}: \left[H^n_{\mathfrak{m}}(R/fR)\right]_{<0} \rightarrow \left[H^n_{\mathfrak{m}}(R/f^tR)\right]_{<0}$$
is injective.
\end{lemma}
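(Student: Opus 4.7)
The plan is to reduce the statement to a concrete condition on polynomials via the commutative diagram used in the proof of Lemma \ref{interpretation mu} (applied with $e = 1$) and then run an iterated derivative argument analogous to the proof of Lemma \ref{Abschaetzung p kein Teiler}. Since $\widetilde{F^1_t}$ is graded up to Frobenius twist, it suffices to work with a homogeneous element $\eta$ of negative degree; such an $\eta$ embeds into $H^{n+1}_{\mathfrak{m}}(R)(-d)$ as a class $\bigl[g/(x_0\cdots x_n)^N\bigr]$ for some $N$ and some homogeneous $g\in R$ with $g\notin\mathfrak{m}^{[N]}$, $fg\in\mathfrak{m}^{[N]}$, and $\deg g<Nw-d$. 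The diagram identifies $\widetilde{F^1_t}(\eta)$ with the class of $\bigl[f^{p-t}g^p/(x_0\cdots x_n)^{pN}\bigr]$, so the task becomes: under $p\geq nd-w-td+1$, show that $f^{p-t}g^p\in\mathfrak{m}^{[pN]}$ is incompatible with $g\notin\mathfrak{m}^{[N]}$.

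To prove this, I would iterate the following derivative step. Assuming $f^{p-t-k}g^p\in\mathfrak{m}^{[pN]}$ for some $0\leq k\leq p-t-1$, differentiate with respect to $x_i$; since $\partial_i(g^p)=0$ in characteristic $p$, one obtains $(p-t-k)f^{p-t-k-1}(\partial_i f)g^p\in\mathfrak{m}^{[pN]}$, and as $1\leq p-t-k\leq p-1$ the scalar is a unit, so
\[
f^{p-t-k-1}g^p\cdot J(f)\subset\mathfrak{m}^{[pN]},
\]
i.e., $f^{p-t-k-1}g^p\in(\mathfrak{m}^{[pN]}:J(f))$. By Lemma \ref{max Grad Jacobi} and Lemma \ref{lemma colon ideal}, and since the element is homogeneous, either (A) $f^{p-t-k-1}g^p\in\mathfrak{m}^{[pN]}$, in which case we proceed to step $k+1$, or (B)
\[
\deg g\;\geq\;Nw+\frac{w-(n+p-t-k)d}{p}.
\]

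To rule out Case (B), I would combine this lower bound with the two available upper bounds on $\deg g$: the integral bound $\deg g\leq(N-1)w$ forced by $g$ being homogeneous with $g\notin\mathfrak{m}^{[N]}$, and $\deg g\leq Nw-d-1$ coming from $\deg g<Nw-d$. A direct numerical computation shows that $p\geq nd-w-td+1$ makes the Case (B) lower bound strictly larger than the minimum of these upper bounds already at $k=0$; moreover, the analogous condition for $k>0$ is strictly weaker in $k$, so Case (B) cannot occur at any step of the iteration. Hence Case (A) holds throughout, and iterating through $k=0,1,\ldots,p-t-1$ forces $g^p\in\mathfrak{m}^{[pN]}$; by flatness of Frobenius on the regular ring $R$ this gives $g\in\mathfrak{m}^{[N]}$, contradicting the choice of $g$.

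The main obstacle is the numerical bookkeeping in the final step: the sharp constant $nd-w-td+1$ only emerges when both upper bounds on $\deg g$ are used simultaneously (the strict inequality $\deg g<Nw-d$ from the negative-degree hypothesis and the integral bound from $g\notin\mathfrak{m}^{[N]}$). Using either bound alone yields a looser threshold on $p$, so tracking their interaction at $k=0$ is the delicate point of the argument.
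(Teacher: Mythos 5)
Your proposal is correct and follows essentially the same route as the paper: reduce via the commutative diagram from Lemma \ref{interpretation mu} to injectivity of $f^{p-t}F$, take a representative $\bigl[g/(x_0\cdots x_n)^N\bigr]$ of negative degree, and rule out $f^{p-t}g^p\in\mathfrak{m}^{[pN]}$ for nonzero classes by the derivative argument of Lemma \ref{Abschaetzung p kein Teiler} together with Lemmas \ref{max Grad Jacobi} and \ref{lemma colon ideal}. The only cosmetic difference is that you iterate downward through $k=0,\ldots,p-t-1$, whereas the paper takes $k:=\min\{l\,:\,f^lg^p\in\mathfrak{m}^{[pN]}\}$ and derives the contradiction in one shot at that minimal $k$; both are logically equivalent. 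One inaccuracy in your closing remark: the sharp threshold $p\geq nd-w-td+1$ already falls out of the single bound $\deg g\leq Nw-d-1$ (the negative-degree condition made integral), as the paper's computation shows; the extra bound $\deg g\leq (N-1)w$ from $g\notin\mathfrak{m}^{[N]}$ is not needed and in fact gives a weaker, $p$-independent condition in the Calabi-Yau case $d=w$, so the claim that "both bounds must be used simultaneously" is mistaken, though it does not affect the validity of your proof.
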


\begin{proof}
As in the proof of Lemma \ref{interpretation mu} for $e=1$ we have the following commutative diagram
\begin{equation*}
\begin{tikzcd}
      0 \arrow{r} & \left[H^n_{\mathfrak{m}}(R/fR)\right]_{\leq -1} \arrow{r} \arrow{d}{\widetilde{F^1_t}}  & \left[H^{n+1}_{\mathfrak{m}}(R)\right]_{\leq -d-1} \arrow{r} \arrow{d}{f^{p-t}F} & \ldots \\
      0 \arrow{r} & \left[H^n_{\mathfrak{m}}(R/f^tR)\right]_{\leq -p} \arrow{r} & \left[H^{n+1}_{\mathfrak{m}}(R)\right]_{\leq -dt-p} \arrow{r} & \ldots.
\end{tikzcd}
\end{equation*}
and again it suffices to prove the injectivity of $f^{p-t}F$. 

For this, let $\left[ \frac{g}{(x_0 \cdots x_n)^{q/p}} \right]$ be an element of $\left[H^{n+1}_{\mathfrak{m}}(R)\right]_{\leq -d-1}$, where $g$ is quasi-homogeneous of degree $\deg(g) \leq w \frac{q}{p}-d-1$ for some $q$.
Suppose that $\left[ \frac{g}{(x_0 \cdots x_n)^{q/p}} \right] \in \Ker \left( f^{p-t}F \right)$, which means that $$0=f^{p-t}F \left( \left[ \frac{g}{(x_0 \cdots x_n)^{q/p}} \right] \right)=\left[ \frac{f^{p-t} g}{(x_0 \cdots x_n)^{q}} \right].$$
Therefore, $f^{p-t}g^p \in \mathfrak{m}^{[q]}$.
Let $$k:=\min\left\{ l|f^lg^p \in \mathfrak{m}^{[q]}\right\},$$ then $0 \leq k \leq p-t$. We want to show that $k=0$, so suppose $k \neq 0$. Arguing as in the proof of Lemma \ref{Abschaetzung p kein Teiler}, we get $f^{k-1}g^p J(f) \subset \mathfrak{m}^{[q]}$.
By Lemma \ref{max Grad Jacobi} and Lemma \ref{lemma colon ideal} it follows that $$f^{k-1}g^p \in \left(\mathfrak{m}^{[q]} :_R J(f) \right) \setminus \mathfrak{m}^{[q]} \subset \left(\mathfrak{m}^{[q]} :_R R_{\geq (n+1)d-2w+1} \right) \setminus \mathfrak{m}^{[q]} \subset R_{\geq (q+1)w-(n+1)d},$$
which implies that $(k-1)d + p \deg(g)=\deg(f^{k-1}g^p) \geq (q+1)w-(n+1)d$.
Since $k \leq p-t$ and $\deg(g) \leq w \frac{q}{p}-d-1$, a short computation yields $p \leq nd-w-td$, which is a contradiction. Therefore, $k=0$, which means that $g^p \in \mathfrak{m}^{[q]}$. 
Thus, $\left[ \frac{g}{(x_0 \cdots x_n)^{q/p}} \right]=0$ and therefore, $f^{p-t}F$ is injective.
\end{proof}

Using this, we can now prove the third equality of (\ref{Gleichheiten}).
Since $p \geq w(n-2)+1 \geq w(n-1-t)+1$ for $1 \leq t \leq p$ and using Lemma \ref{Frob injective}, we know that $\widetilde{F^1_t}$ is injective in negative degrees. Then, since $a_t=\left(\widetilde{F^1_t}\right)_0$, the asserted equality follows.

Next, we prove the fourth equation.
By the first part of Theorem \ref{theorem f-schwelle} we know that $\mu_f(p) = p-h$ and this gives the two inequalities $\mu_f(p)>p-h-1=p-(h+1)$ and $\mu_f(p) \leq p-h$. Thus, by Lemma \ref{interpretation mu} it follows that $\widetilde{F^1_{h+1}} $ is injective, but $\widetilde{F^1_h} $ is not injective. Therefore, $h+1= \min \left\{t \big| \widetilde{F^1_t} \text{ injective } \right\}$.

Hence, to prove Theorem \ref{Hassethm} it only remains to show the second equality of (\ref{Gleichheiten}), which is the most difficult part of the proof.
First, to simplify the notation we will write in the following $f^t$ instead of $f^t \cdot R$, $f^{t-1}/f^t$ instead of $f^{t-1} \cdot R/f^t \cdot R$ and so forth.

Now, diagram (\ref{c_t}) shows that it is sufficient to show that $c_t$ is injective for all $t$. For this, consider the following commutative diagram with exact rows
\begin{equation*}
\begin{tikzcd}
    0 \arrow{r} & f^{t-1}/f^t \arrow{r} \arrow{d}{h_4} & R/f^t \arrow{r} \arrow{d}{h_1} & R/f^{t-1} \arrow{r} \arrow{d}{h_1} & 0 \\
0 \arrow{r} & \left(F,s^{t-1}\right)/\left(F,s^t\right) \arrow{r} & \tilde{R}/\left(F,s^t\right) \arrow{r} & \tilde{R}/\left(F,s^{t-1}\right) \arrow{r} & 0.
\end{tikzcd}
\end{equation*}
Since $f^{t-1} \in \left(F, s^{t-1}\right)$, the map $\varphi_3: f^{t-1} \hookrightarrow \left(F, s^{t-1}\right) \twoheadrightarrow \left(F, s^{t-1}\right)/\left(F, s^t\right)$ induces $h_4$.
Passing to local cohomology we obtain the following diagram 
{\scriptsize%
\begin{equation*}
\begin{tikzcd}
       H^{n-1}_{\mathfrak{m}}\left(R/f^{t-1}\right)_0 \arrow{r} \arrow{d} & H^n_{\mathfrak{m}}\left(f^{t-1}/f^t\right)_0 \arrow{r}{a} \arrow{d}{\phi_t} & H^n_{\mathfrak{m}}\left(R/f^t\right)_0 \arrow{r}{d} \arrow{d}{c_t} & H^n_{\mathfrak{m}}\left(R/f^{t-1}\right)_0  \arrow{d}{c_{t-1}}\\
 H^{n-1}_{\mathfrak{m}}\left(\tilde{R}/\left(F,s^{t-1}\right)\right)_0 \arrow{r} & H^n_{\mathfrak{m}}\left(\left(F,s^{t-1}\right)/\left(F,s^{t}\right)\right)_0 \arrow{r}{b} & H^n_{\mathfrak{m}}\left(\tilde{R}/\left(F,s^{t}\right)\right)_0 \arrow{r}{e} & H^n_{\mathfrak{m}}\left(\tilde{R}/\left(F,s^{t-1}\right)\right)_0.
\end{tikzcd}
\end{equation*}
}

The rest of the proof mainly consists of the following three steps: We first show that $b$ is injective and then conclude that it suffices to prove the injectivity of $\phi_t$ for all $t$ in order to show that $c_t$ is injective for all $t$. As a last step, we prove the injectivity of $\phi_t$ for all $t$.

\textbf{Step 1:} The map $b$ is injective: To prove this, we show that $H^{n-1}_{\mathfrak{m}}\left(\tilde{R}/\left(F,s^{t-1}\right)\right)=0$. For $t=1$ this is clear. For $t=2$ we get $$H^{n-1}_{\mathfrak{m}}\left(\tilde{R}/\left(F,s^{t-1}\right)\right)=H^{n-1}_{\mathfrak{m}}\left(\tilde{R}/\left(F,s\right)\right)=H^{n-1}_{\mathfrak{m}}\left(R/f\right)=0.$$ Now let $t \geq 3$ and consider the exact sequence
\begin{equation*}
\begin{xy}
  \xymatrix{
      0 \ar[r] & \left(F,s^{t-1}\right)/\left(F,s^{t}\right) \ar[r] & \tilde{R}/\left(F,s^t\right) \ar[r] & \tilde{R}/\left(F,s^{t-1}\right) \ar[r] & 0.
  }
\end{xy}
\end{equation*}
This gives the long exact sequence
\begin{equation*}
\begin{tikzcd}
    H^{n-1}_{\mathfrak{m}}\left(\left(F,s^{t-1}\right)/\left(F,s^{t}\right)\right) \arrow{r} & H^{n-1}_{\mathfrak{m}}\left(\tilde{R}/\left(F,s^t\right)\right) \arrow{r} & H^{n-1}_{\mathfrak{m}}\left(\tilde{R}/\left(F,s^{t-1}\right)\right).
\end{tikzcd}
\end{equation*}
By induction, we know that $H^{n-1}_{\mathfrak{m}}\left(\tilde{R}/\left(F,s^{t-1}\right)\right)=0$ and in the following we will show that $H^{n-1}_{\mathfrak{m}}\left(\left(F,s^{t-1}\right)/\left(F,s^{t}\right)\right)=0$. Using this and the exact sequence above, it follows that $H^{n-1}_{\mathfrak{m}}\left(\tilde{R}/\left(F,s^{t}\right)\right)=0$. Therefore, it remains to show that \linebreak $H^{n-1}_{\mathfrak{m}}\left(\left(F,s^{t-1}\right)/\left(F,s^{t}\right)\right)=0$. For this, we compute 
\begin{align}\label{isom}
\left(F,s^{t-1}\right)/\left(F,s^{t}\right) \notag
&= \left(F+s^t+s^{t-1}\right)/\left(F+s^{t}\right) \notag \\ 
& \cong \left(s^{t-1}\right)/\left(s^{t-1} \cap \left(F+s^{t}\right)\right) \notag \\
& \cong \left(s^{t-1}/s^{t}\right) / F (s^{t-1}/s^{t})\\ 
& \cong \left(s^{t-1}/s^{t}\right) / f \left(s^{t-1}/s^{t}\right) \notag \\ 
& \cong R/f \otimes_K \left(s^{t-1}/s^{t}\right). \notag
\end{align}
Hence, using \cite[Proposition 7.15]{24hours}, we get
\begin{align*}
H^{n-1}_{\mathfrak{m}}\left(\left(F,s^{t-1}\right)/\left(F,s^{t}\right)\right) 
& \cong H^{n-1}_{\mathfrak{m}}\left(R/f \otimes_K \left(s^{t-1}/s^{t}\right)\right)\\
& \cong H^{n-1}_{\mathfrak{m}}\left(R/f\right) \otimes_K \left(s^{t-1}/s^{t}\right)\\ 
& = 0.
\end{align*}

\textbf{Step 2:} The second step of the proof is to show that it suffices to prove the injectivity of $\phi_t$ for all $t$, in order to prove the injectivity of $c_t$ for all $t$.
Again, we do this by induction on $t$. It is easy to see that $c_1$ is injective if and only if $\phi_1$ is injective. Now, suppose $c_1, \ldots, c_{t-1}$ are injective.  
Then by a diagram chase and using that $b$ and $\phi_t$ are injective, one can prove that $c_t$ is also injective.

\textbf{Step 3:} Now, as a last step, we prove the injectivity of 
$$\phi_t: H^n_{\mathfrak{m}}\left(f^{t-1}/f^t\right)_0 \rightarrow H^n_{\mathfrak{m}}\left(\left(F,s^{t-1}\right)/\left(F,s^t\right)\right)_0.$$ 
For this, consider the projective resolution 
\begin{equation*}
\begin{tikzcd}
      0 \arrow{r} & f^t \arrow{r} & f^{t-1} \arrow{r} & f^{t-1}/f^t \arrow{r} & 0
\end{tikzcd}
\end{equation*}
of $f^{t-1}/f^t$ (remark that $(f^t)$ can be identified with $R(-tw)$).
Tensoring the sequence
\begin{equation*}
\begin{tikzcd}
     0 \arrow{r} & R(-w) \arrow{r}{f} & R \arrow{r} & R/f \arrow{r} & 0
\end{tikzcd}
\end{equation*}
with $s^{t-1}/s^{t}$ over $K$ yields the projective resolution
\begin{equation*}
{\small
\begin{tikzcd}
      0 \arrow{r} & R(-w) \otimes_K \left(s^{t-1}/s^{t}\right) \arrow{r}{f} & R \otimes_K \left(s^{t-1}/s^{t}\right) \arrow{r} & R/f \otimes_K \left(s^{t-1}/s^{t}\right) \arrow{r} & 0
\end{tikzcd}}
\end{equation*}
of $R/f \otimes_K \left(s^{t-1}/s^{t}\right) \cong \left(F,s^{t-1}\right) / \left(F,s^t\right)$ (see (\ref{isom})).
Altogether, we have the following situation
\begin{equation*}
{\small%
\begin{tikzcd}
      0 \arrow{r} & (f^{t-1}) \cdot f \arrow{r} \arrow[dashed]{d}{\theta} & f^{t-1} \arrow{r} \arrow[dashed]{d}{\theta} & f^{t-1}/f^t \arrow{r} \arrow[dashed]{d}{\theta} & 0 \\
0 \arrow{r} &  R(-w) \otimes_K \left(s^{t-1}/s^{t}\right) \arrow{r}{f} & R \otimes_K \left(s^{t-1}/s^{t}\right) \arrow{r} & R/f \otimes_K \left(s^{t-1}/s^{t}\right) \arrow{r} & 0,
\end{tikzcd}}
\end{equation*}
where the map $\theta$ is induced by $h_4$ and sends $f^{t-1}$ to $\left(\sum\limits_{i=1}^m s_ig_i\right)^{t-1}$ considered as an element of $R/f \otimes_K \left(s^{t-1}/s^{t}\right) \cong \left(F,s^{t-1}\right) / \left(F,s^t\right)$,
i.e. if we define $\left\{f_j\right\}$ to be a basis of the polynomials of degree $t-1$ in the variables $y_1, \ldots y_m$ then
$$\theta \left(f^{t-1}\right)= \sum_j c_j f_j \left( g_1, \ldots, g_m\right) \otimes f_j \left(s_1, \ldots, s_m\right)$$
for some coefficients $c_j \in K$.
If we apply the functor $\Hom_R \left(-, R(-w) \right)_0$ to the above diagram, we get a map 
$$\psi_t: \Hom_R\left(R(-w) \otimes_K \left(s^{t-1}/s^{t}\right), R(-w)\right)_0 \rightarrow \Hom \left(f^{t}, R(-w)\right)_0,$$
 which sends a map $\varphi$ to $\varphi \circ \theta$ and induces a map
$$\psi_t: \Ext^1_R \left(\left(F,s^{t-1}\right)/\left(F,s^t\right), R(-w)\right)_0 \rightarrow \Ext^1_R \left( f^{t-1}/f^t, R(-w)\right)_0.$$
Here,
\begin{align*}
\Ext^1_R \left(f^{t-1}/f^t, R(-w)\right)
&=\Hom_R\left(f^t, R(-w)\right) \Big/ f \cdot \Hom_R\left(f^{t-1}, R(-w)\right)
\end{align*}
and
\begin{align*}
\Ext^1_R &\left(\left(F,s^{t-1}\right) / \left(F,s^t\right), R(-w)\right)\\
=&\Hom_R\left(R(-w) \otimes_K \left(s^{t-1}/s^{t}\right), R(-w)\right) \Big/ f \cdot \Hom_R\left(R \otimes_K \left(s^{t-1}/s^{t}\right), R(-w)\right). 
\end{align*}
By the functoriality of the local duality (see \cite[Theorem 3.6.19]{BrunsHerzog}) the map $\psi_t$ is equal to the map $\phi_t^{\vee}$, since $\phi_t$ is induced by the map $\theta$ by passing to local cohomology and then taking the degree zero parts.

Now, the idea is to prove the surjectivity of $\phi_t^{\vee}$ instead of proving the injectivity of $\phi_t$ by using the above description as Ext-modules.
Therefore, we want to examine $\psi_t$ more closely:
\begin{equation*}
\begin{tikzcd}
	\Hom_R\left(R(-w) \otimes_K \left(s^{t-1}/s^{t}\right), R(-w)\right)_0 \arrow{d}{\psi_t} \arrow{r}{\cong} & \left(R(-w) \otimes_K \left(s^{t-1}/s^{t}\right)^{\vee}\right)_0 \\
\Hom \left(f^{t}, R(-w)\right)_0 \arrow{d}{\cong} & \\
 \Hom \left(R(-tw), R(-w)\right)_0 \arrow{d}{\cong} & \\
R((t-1)w)_0 = R_{(t-1)w}, &
\end{tikzcd}
\end{equation*}
where the first vertical isomorphism is given by the identification of $f^t$ with $R(-tw)$ and the second vertical isomorphism is given by $\varphi \mapsto \varphi(1)$.

Using the above, we get
$$\theta \left(rf^t\right)= \theta \left(r \cdot f \cdot f^{t-1}\right)=rf\theta \left(f^{t-1}\right)= rf \sum_j c_j f_j \left( g_1, \ldots, g_m\right) \otimes f_j \left(s_1, \ldots, s_m\right)$$
for some coefficients $c_j \in K$.
For an element $1 \otimes \delta_i \in \left(R(-w) \otimes_K \left(s^{t-1}/s^{t}\right)^{\vee}\right)_0$, where $\left\{\delta_i\right\}$ is a dual basis of $s^{t-1}/s^{t}$, we have
\begin{align*}
\psi_t \left(1 \otimes \delta_i\right)
&= \left[rf^t \mapsto \left(1 \otimes \delta_i\right) \left(\theta \left(rf^t\right)\right) \right] \\
&= \left[rf^t \mapsto \left(1 \otimes \delta_i\right) \left( rf \sum_j c_j f_j \left( g_1, \ldots, g_m\right) \otimes f_j \left(s_1, \ldots, s_m\right) \right) \right]\\
&= \left[rf^t \mapsto rf c_i f_i \left( g_1, \ldots, g_m\right)  \right].
\end{align*}
As an element of $\Hom \left(R(-tw), R(-w)\right)_0$ this map is given by $r \mapsto r c_i f_i \left( g_1, \ldots, g_m\right)$ and via the last vertical isomorphism this map is sent to $c_i f_i \left( g_1, \ldots, g_m\right) \in R_{(t-1)w}$.
With these observations the surjectivity of $\psi_t$ becomes clear, since the set $\left\{f_j\right\}$ forms a basis of the space of polynomials of degree $t-1$.
\end{proof}

%\nocite{*}
%\bibliographystyle{alpha}
%\bibliography{Literaturverzeichnis}

\newcommand{\etalchar}[1]{$^{#1}$}

\end{document}